\documentclass[11pt]{article}

\usepackage[margin=1.05in]{geometry}
\usepackage{amsmath,amssymb,amsthm,mathtools}
\usepackage{booktabs}
\usepackage{array}
\usepackage{enumitem}
\usepackage{hyperref}
\usepackage{xcolor}
\usepackage{longtable}

\hypersetup{colorlinks=true,linkcolor=blue,citecolor=blue,urlcolor=blue}

\newtheorem{theorem}{Theorem}[section]
\newtheorem{lemma}[theorem]{Lemma}

\newtheorem{corollary}[theorem]{Corollary}
\newtheorem{question}[theorem]{Question}
\newtheorem{conjecture}[theorem]{Conjecture}
\theoremstyle{definition}

\newtheorem{example}[theorem]{Example}

\newcommand{\R}{\mathbb{R}}
\newcommand{\Z}{\mathbb{Z}}

\newcommand{\cL}{\mathcal{L}}

\newcommand{\den}{\operatorname{den}}

\begin{document}
\begingroup
\renewcommand{\thefootnote}{\fnsymbol{footnote}}

\begin{center}
{\huge Resistance Curvature: Recognition, Polyhedral Structure, and Graph Products}\\[18pt]
{\Large Meng Guo$^{1}$, Wensheng Sun$^{2}$, and Yujun Yang$^{3,1}$\footnotemark[1]}\\[6pt]
{\footnotesize
$^{1}$ School of Mathematics and Information Sciences, Yantai University, Yantai, Shandong 264005, China\\
$^{2}$ School of Mathematics and Statistics, Gansu Center for Applied Mathematics, Lanzhou University, Lanzhou, Gansu 730000, China\\
$^{3}$ Department of Mathematics and Artificial Intelligence, Qilu University of Technology (Shandong Academy of Sciences), Jinan, Shandong 250100, China\\
E-mail addresses: \texttt{mg060328@163.com}, \texttt{wensheng07002@163.com}, and \texttt{yangyujun@qlu.edu.cn}}
\end{center}

\footnotetext[1]{Corresponding author. E-mail address: \texttt{yangyujun@qlu.edu.cn}.}
\endgroup

\vspace{1mm}
\begin{abstract}
		Resistance curvature, introduced by Devriendt and Lambiotte, is a novel discrete curvature notion defined through effective resistance on weighted graphs. A graph is called resistance nonnegative if there exists a choice of positive edge weights for which the resistance curvature is nonnegative at every vertex. This property has a notable combinatorial interpretation in terms of random spanning trees: a graph is resistance nonnegative if and only if it admits a distribution on its spanning trees under which every vertex has expected degree at most two. Equivalently, resistance nonnegativity can be characterized by the intersection of the relative interior of the spanning tree polytope with the twice-dilated matching polytope. These characterizations reveal strong connections among resistance curvature, effective resistance, spanning tree distributions, matching theory, and polyhedral combinatorics. Based on the sign of the curvature, Devriendt introduced the classes of resistance nonnegative (RN), resistance positive (RP), and strictly resistance nonnegative (SRN) graphs, and posed several questions concerning their recognition, polyhedral structure, and structural properties.
		
		In this paper, we first answer Devriendt's question on the computational complexity of recognizing RN, RP, and SRN graphs by proving that all three classes can be recognized in polynomial time. We then address his question concerning the tree double matching polytope $\Theta(G)=P(G)\cap 2M(G)$, where $P(G)$ and $M(G)$ denote the spanning tree polytope and the matching polytope of $G$, respectively.  Further, we characterize the vertices of $\Theta(G)$ in terms of full-rank systems of tight constraints. Whenever $\Theta(G)\neq\emptyset$, we also determine the least positive integer $k_G$ such that $k_G\Theta(G)$ is a lattice polytope. Finally, for every finite Cartesian product of paths, we explicitly construct an average point satisfying the condition for resistance nonnegativity, thereby obtaining that such graphs are RN. We further characterize the classes of such Cartesian product graphs that are RP or SRN.
	\end{abstract}
	
     \textbf{Keywords:}
		resistance curvature; spanning tree polytope; matching polytope; polynomial time; Cartesian product; Hamiltonian path.
        
  {\bf 2020 Mathematics Subject Classification:} 05C12, 05B35, 05C70, 90C27.
	
	\section{Introduction}
	\subsection{Resistance curvature}

Throughout the paper, all graphs are finite, simple, and connected unless stated otherwise. Let $G=(V,E)$, with $n=|V|$ and $m=|E|$. A positive conductance function on $G$ is a map $c:E\to\mathbb R_{>0}$. The weighted graph $(G,c)$ can naturally be regarded as an electrical network. In this associated electrical network, each edge $e\in E$ is viewed as a resistor with resistance $c_e^{-1}$. For distinct vertices $u,v\in V$, the effective resistance $\omega_{uv}(c)$ is the potential difference between $u$ and $v$ resulting from injecting one unit of current at $u$ and extracting it at $v$. As a fundamental concept in circuit theory, effective resistance has been extensively studied since Kirchhoff's current law~\cite{KirchhoffKL,SeshuReedLG}, and its computation remains a classical problem in physics and engineering. Although effective resistance originated in the study of electrical circuits and potential theory, it has a purely mathematical interpretation. Its equivalent definitions can be given from combinatorial, algebraic, and geometric perspectives; see, for example,~\cite{BapatGutmanXiaoRD,DevriendtER,KleinRandicRD,XiaoGutmanRD}.
	
A \emph{spanning tree} of $G$ is a connected acyclic spanning subgraph of $G$, and we denote the set of all spanning trees of $G$ by $\mathcal T(G)$. For each edge $uv\in E$, the effective resistance $\omega_{uv}(c)$ admits the following representation in terms of weighted spanning trees:
\begin{equation}\label{eq:effective_resistance}
\omega_{uv}(c)=\frac{1}{c_{uv}}\frac{\displaystyle\sum_{T\in\mathcal T(G):\,uv\in E(T)}\prod_{e\in E(T)}c_e}{\displaystyle\sum_{T\in\mathcal T(G)}\prod_{e\in E(T)}c_e}.
\end{equation}
	
Following Devriendt~\cite{DevriendtRN}, for each edge $e=uv\in E$, the \emph{relative resistance} of $uv$ is defined by $r_{uv}(c):=c_{uv}\omega_{uv}(c)$. By~\eqref{eq:effective_resistance}, we have
\[
r_{uv}(c)=\frac{\displaystyle\sum_{T\in\mathcal T(G):\,uv\in E(T)}\prod_{e\in E(T)}c_e}{\displaystyle\sum_{T\in\mathcal T(G)}\prod_{e\in E(T)}c_e}=\mathbb P_c\bigl(uv\in E(T)\bigr),
\]
where $T$ is sampled from the $c$-weighted distribution on $\mathcal T(G)$ in which the probability of a spanning tree is proportional to the product of the conductances of its edges. Thus, $r_{uv}(c)$ is precisely the probability that the edge $uv$ belongs to a $c$-weighted random spanning tree.
	
	Motivated by the geometry of effective resistance, Devriendt and Lambiotte \cite{DevriendtLambiotteDC} introduced a discrete scalar curvature on the vertices of a weighted graph. More precisely, the \emph{resistance curvature} of $(G, c)$ at a vertex $v \in V$ is defined by
\begin{equation}\label{eq:curvature}
p_v(c)=1-\frac{1}{2}\sum_{u\sim v}c_{uv}\omega_{uv}(c)=1-\frac{1}{2}\sum_{u\sim v}r_{uv}(c).
\end{equation}
	Moreover, Foster's first theorem \cite{FosterAI} states that
	\begin{equation} \label{eq:foster}
		\sum_{e \in E} r_e(c) = |V| - 1
	\end{equation}
	for every positive conductance function $c$. Summing \eqref{eq:curvature} over all vertices and using \eqref{eq:foster}, we obtain \(\sum_{v \in V} p_v(c)=1\).
	This identity may be viewed as a discrete analogue of a total curvature formula.
	
	Resistance curvature provides a direct link between the geometry of electrical networks and the combinatorics of spanning trees. For a vertex $v\in V$, the sum
	\(\sum_{u \sim v} c_{uv} \, \omega_{uv}(c)\)
	is the expected degree of $v$ in such a random spanning tree $T$:
	\[
		\sum_{u\sim v}c_{uv}\omega_{uv}(c)
		=
		\sum_{u\sim v}
		\mathbb{E}_c\!\left[\mathbf{1}_{\{uv\in T\}}\right]
		=
		\mathbb{E}_c[d_T(v)].
	\]
	Hence
	\begin{equation}\label{eq:degree}
		p_v(c) = 1 - \frac{1}{2} \, \mathbb{E}_c\big[ d_T(v) \big],
	\end{equation}
	where $d_T(v)$ denotes the degree of $v$ in $T$. Thus, nonnegative resistance curvature at $v$ is equivalent to requiring that the expected degree of $v$ in the corresponding random spanning tree be at most two.
	
The preceding discussion shows that resistance curvature brings together ideas from discrete geometry, electrical network theory, random spanning trees, and combinatorial optimization. In particular, equation~\eqref{eq:degree} allows techniques from polyhedral combinatorics, matroid theory, and matching theory to be applied to the study of resistance curvature. Moreover, resistance curvature is related to other notions of discrete curvature, such as Ollivier--Ricci curvature and Forman--Ricci curvature~\cite{DevriendtLambiotteDC}. These connections not only make resistance curvature an interesting object of theoretical study, but also suggest its potential as a geometric descriptor for graph-structured data analysis and graph learning~\cite{FeiCN,SouthernCF}.
	
In this direction, Devriendt~\cite{DevriendtRN} introduced the following classification of graphs according to the sign of resistance curvature. A graph $G$ is called \emph{resistance nonnegative (RN)} if there exists a positive conductance function $c$ such that $p_v(c)\geq0$ for every $v\in V$. It is called \emph{resistance positive (RP)} if the inequalities can all be made strict, and \emph{strictly resistance nonnegative (SRN)} if it is RN but not RP. A central problem in the study of resistance curvature is to determine for which graphs $G$ there exists a choice of edge weights $c$ such that $G$ is RN, SRN or RP. From a geometric viewpoint, regarding the edge weights as inverse edge lengths allows the problem to be reformulated as determining which graphs admit a metric with nonnegative or positive resistance curvature. This is a discrete analogue of the classical problem in differential geometry of determining which manifolds admit metrics with nonnegative or positive scalar curvature \cite{SchoenYauPC}.
	\subsection{The polyhedral framework}
	Let \(G = (V, E)\) be a graph, where \(V\) and \(E\) denote the vertex set and the edge set of \(G\), respectively.
	For a vector $x = (x_e)_{e \in E} \in \mathbb{R}^E$ and a subset $F \subseteq E$, write \(x(F) = \sum_{e \in F} x_e\).
	For $U \subseteq V$, let $E(U) = \{ uv \in E : u, v \in U \}$ denote the edge set of the subgraph induced by $U$. The degree of a vector \(x\) at a vertex \(v\) is
	\(d_v(x) = \sum_{u: uv \in E} x_{uv}\). For each $T \in \mathcal{T}(G)$, let
	\[
	\chi^T(e) =
	\begin{cases}
		1 & \text{if } e \in T, \\
		0 & \text{otherwise.}
	\end{cases}
	\]
	be the incidence vector of $T$. The \emph{spanning tree polytope} of $G$ is defined as the convex hull of the incidence vectors of all spanning trees of $G$, that is,
	\begin{equation} \label{eq:tree_polytope}
		P(G) = \operatorname{conv}\{ \chi^T : T \in \mathcal{T}(G) \}.
	\end{equation}
	Equivalently, $P(G)$ consists of all vectors of the form
\(	x = \sum_{T \in \mathcal{T}(G)} \lambda_T \chi^T\),
	where $\lambda_T \geq 0$ for every $T \in \mathcal{T}(G)$ and $\sum_{T \in \mathcal{T}(G)} \lambda_T = 1$.
	
	For $F \subseteq E$, let $\kappa_G(F)$ denote the number of connected components of the spanning subgraph $(V, F)$ and put
	\[
	r_G(F) = n - \kappa_G(F),
	\]   which is the rank function of the graphic matroid of \(G\).
	An intuition behind \(r_G(F)\) is that it is the largest acyclic set of edges that can be chosen from
	$F$, that is $r_G(F) = max\{|A| : A\subseteq F, A \text{ is acyclic}\}$, so we have $ x(F)\le r_G(F)$ \cite{EdmondsGA,EdmondsSF}.
	
	Edmonds' description of the graphic matroid base polytope is
	\begin{equation} \label{eq:edmonds}
		P(G) = \left\{ x \in \mathbb{R}^E_{\geq 0} :
		x(E) = n - 1, \;
		x(F) \leq r_G(F) \text{ for every } F \subseteq E \right\}.
	\end{equation}
	Equivalently, we can use the fact that spanning trees form a matroid \cite{MohammadiNA}. Thus the rank inequalities may be  replaced by
	\[
	x(E(U)) \leq |U| - 1 \quad \text{for all nonempty proper sets } U \subseteq V.
	\]
	 Since $P(G)$ need not be full-dimensional in $\mathbb{R}^E$, we let $P(G)^\circ$ denote the relative interior of \(P(G)\) in its affine subspace.
	 	
	 We use the following characterization of RN and RP graphs from Devriendt's paper \cite[Theorem 3.8, Corollary 3.9]{DevriendtRN}:
		\begin{equation}\label{eq:RN-RP-polytope}
		\begin{aligned}
			G\text{ is RN}
			&\Longleftrightarrow
			P(G)^\circ\cap \{x:d_v(x)\le 2\text{ for every }v\in V\}\neq\emptyset,\\
			G\text{ is RP}
			&\Longleftrightarrow
			P(G)^\circ\cap \{x:d_v(x)< 2\text{ for every }v\in V\}\neq\emptyset.
		\end{aligned}
	\end{equation}
	Equivalently, RN and RP are characterized by positive probability distributions on $\mathcal{T}(G)$ whose expected vertex degrees are at most two and strictly smaller than two, respectively.

	Let
	\[
	M(G) = \operatorname{conv}\{ \chi^M : M \subseteq E \text{ is a matching} \}
	\]
	be the \emph{matching polytope} of $G$. Scaling the hyperplane description of the matching polytope by a factor of two yields the double matching polytope $2M(G)$ \cite{EdmondsMC}:
	\begin{equation} \label{eq:matching_polytope}
		2M(G) = \left\{ x \in \mathbb{R}^E_{\geq 0} :
		\begin{array}{l}
			d_v(x) \leq 2 \quad (v \in V), \\[2pt]
			x(E(U))\leqslant 2\lfloor\frac{1}{2}|U|\rfloor \text{ for all odd-sized }U\subseteq V
		\end{array}
		\right\}
	\end{equation}
	 Devriendt introduced the \emph{tree double matching (TDM) polytope}, defined by
	\(\Theta(G) = P(G) \cap 2M(G)\).
	
	If $x \in P(G)$ and $U \subseteq V$ is odd, then, by \eqref{eq:edmonds},
	\[
	x(E(U)) \leq r_G(E(U)) = |U| - \kappa(G[U]) \leq |U| - 1 = 2 \left\lfloor \frac{|U|}{2} \right\rfloor.
	\]
	Thus the odd-set inequalities are automatic on $P(G)$, and hence
	\begin{equation} \label{eq:TMD-polytope}
		\Theta(G) = P(G) \cap \{ x : d_v(x) \leq 2 \text{ for every } v \in V \}.
	\end{equation}
	That is, a point in $\Theta(G)$ only needs to lie in $P(G)$ and satisfy the degree constraints. In particular, $G$ is RN if and only if $P(G)^\circ \cap \Theta(G) \neq \emptyset$. These characterizations place resistance curvature at the intersection of effective resistance, spanning tree distributions, matching theory, and polyhedral combinatorics.
	
Motivated by the polyhedral characterization of resistance nonnegativity, Devriendt posed the following questions and conjecture:
	\begin{question}\label{que:3.11}
		\cite[Question 3.11]{DevriendtRN}
		What is the computational complexity of the decision problems:
		For a given graph $G$, decide if $G$ is RN/RP/SRN?
	\end{question}
	\begin{question}\label{que:4.10}
		\cite[Question 4.10]{DevriendtRN}
		What are the vertices of \(\Theta(G)\)? What is the smallest $k$ such that the $k$-dilated TDM polytope is a lattice polytope?
	\end{question}
	\begin{conjecture}\label{con:6.8}
		\cite[Conjecture 6.8]{DevriendtRN}
		The grid graph $P_{n} \square P_{m}$ is RN for all $m, n \in \mathbb{N}$.
	\end{conjecture}
	\begin{question}\label{que:6.9}\cite[Question 6.9]{DevriendtRN}
	Are all finite Cartesian products of paths RN?
	\end{question}
    
Clearly, Conjecture \ref{con:6.8}  is a special case of Question \ref{que:6.9}.
	\subsection{Our results}
This paper focuses on the above problems, and our main results are summarized as follows.
	
	(1) We answer Question~\ref{que:3.11} completely. Theorem~\ref{thm:complexity} proves that
	$\mathrm{RN}$, $\mathrm{RP}$, and $\mathrm{SRN}$ recognition
	are all solvable in polynomial time. The algorithms are exact and
	use rational linear programming, so no numerical tolerance is required.
	
	(2) We address Question~\ref{que:4.10} by giving a structural characterization
	of the vertices of $\Theta(G)$. Theorem \ref{thm:theta-vertices} shows that every vertex
	is uniquely determined by a full-rank active system consisting of the normalization equation \(x(E) = n-1\), tight rank constraints indexed by a laminar family, tight nonnegativity constraints, and tight degree constraints. This description reduces the study of the vertices and their denominators to explicit linear systems. Corollary \ref{cor:kG} gives the least positive integer $k_G$ for which $k_G \Theta(G)$ is a lattice polytope.
	
	(3) We give an affirmative answer to Question~\ref{que:6.9}. More precisely, Theorem~\ref{thm:path-products-RN} shows that every finite Cartesian product $P_{n_1}\square\cdots\square P_{n_d}$ is $\mathrm{RN}$, while Corollary~\ref{cor:RP-SRN-products} determines exactly which such products are $\mathrm{RP}$ and which are $\mathrm{SRN}$.
    
	\subsection{Organization of the paper}
	The remainder of the paper is organized as follows. Section 2 proves the polynomial time recognition theorem. Section 3 studies the vertices of \(\Theta(G)\) and  its minimal lattice dilation. Section 4 establishes the results on Cartesian products of paths.
	
	\section{Polynomial time recognition of RN, RP, and SRN graphs}
	By (\ref{eq:RN-RP-polytope}), recognizing RN, RP and SRN graphs amounts to deciding whether the relative interior \(P(G)^\circ\) contains a point satisfying the corresponding degree constraints. Devriendt \cite[Remark 3.10]{DevriendtRN} says that RN can be tested by linear programming, but it mentions the usual numerical issue caused by the relative interior condition. Thus we eliminate this problem by introducing an explicit slack variable. Therefore, we prove that all three recognition problems are solvable in polynomial time by linear programming.
	
	\begin{theorem}\label{thm:complexity}
		Given a finite connected graph \(G\), each of the properties RN, RP, and SRN can be decided in polynomial time.
	\end{theorem}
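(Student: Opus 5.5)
Our plan is to turn the characterization \eqref{eq:RN-RP-polytope} into two rational linear programs that can be solved exactly, using the separation form of the ellipsoid method. The difficulty is that $P(G)$ has exponentially many rank inequalities \eqref{eq:edmonds}, and that the relative-interior condition is not closed. We handle the relative interior by first finding the affine hull of $P(G)$ combinatorially. It is standard for the graphic matroid base polytope that
\[
\operatorname{aff}P(G)=\{x: x(E)=n-1,\ x(E(B))=|B|-1 \text{ for every 2-connected block } B \text{ of } G\}.
\]
Here bridges count as blocks with one edge, so their coordinates are fixed at $1$. The dimension equals $m-\#\text{blocks}$. This is because $P(G)$ is the product of the base polytopes of the blocks, and for a 2-connected graph the matroid is connected, so its base polytope has dimension $m_B-1$. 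The blocks can be computed in linear time. It follows that $x\in P(G)^\circ$ if and only if $x$ lies in $\operatorname{aff}P(G)$, satisfies $x_e\ge0$ and $x(F)\le r_G(F)$ for all $F$, and the only inequalities tight at $x$ are the block equalities.

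Next we introduce a slack variable $t$. Consider the LP that maximizes $t$ subject to: $x\in\operatorname{aff}P(G)$; $x_e\ge t$ for all $e$ not a bridge; $x(F)+t\le r_G(F)$ for every $F$ that is not a union of whole blocks; $d_v(x)\le 2$ for all $v$; and $t\le1$. We claim $G$ is RN iff the optimum $t^*$ is positive.

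1. If $t^*>0$, the optimal $x$ lies in $P(G)$ and no facet-defining inequality is tight except those forced by the block structure, so $x\in P(G)^\circ$.
2. Conversely, a point of $P(G)^\circ\cap\Theta(G)$ gives some $t>0$, by finiteness of the constraint set.

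It is cleaner to restrict the rank constraints to a facet-defining family, namely $F=E(U)$ with $G[U]$ 2-connected and $U$ not itself a block. Alternatively, we can keep all $F$ and argue that any $F$ not a union of blocks has $x(F)<r_G(F)$ for relative interior $x$. This holds because such an $F$ cannot be tight on all bases, i.e.\ it is not a separator of the matroid. For RP we use the same LP with $d_v(x)+t\le 2$ in place of $d_v(x)\le2$. SRN is decided as RN and not RP.

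Feasibility and optimization are polynomial by the ellipsoid method (Grötschel--Lovász--Schrijver), since all data are rational with polynomial encoding size. Separation for the constraints $x(F)+t\le r_G(F)$ reduces to separation over the graphic matroid polytope with a shifted right-hand side. This can be done block by block using Cunningham's or Padberg--Wolsey's min-cut-based algorithm: within a 2-connected block $B$ we maximize $x(E(U))-|U|+1$ over $U\subsetneq V(B)$ with $|U|\ge2$, which is polynomial via $O(n)$ min-cut computations. An alternative that avoids the ellipsoid method is Martin's polynomial-size extended formulation of the spanning tree polytope. With that formulation a single explicit LP of polynomial size, solved by any interior-point method in exact rational arithmetic, decides whether $t^*>0$.

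The main obstacle is the correctness of the slack formulation. We must show precisely which rank inequalities are implicit equalities, namely exactly the unions of blocks, and that the strict inequalities on the remaining ones together with $x_e>0$ characterize the relative interior. If Martin's extended formulation is used, we must also check that the slack interacts correctly with the projection: the slack should be placed on the original $x$-constraints, using a description in which strictness is preserved under projection. The rest is routine LP complexity bookkeeping.
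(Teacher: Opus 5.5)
Your proposal is correct, but it handles the relative-interior condition by a different mechanism from the paper.

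\emph{The paper's route.} It never determines the affine hull of $P(G)$. It computes one explicit rational point $a\in P(G)^\circ$: the uniform-spanning-tree edge marginals $\tau_e(G)/\tau(G)$, obtained from the matrix-tree theorem. For RN it maximizes $\lambda$ subject to $w\in(1-\lambda)P(G)$ and $d_v(w)+\lambda d_v(a)\le 2$, using that $(1-\lambda)z+\lambda a\in P(G)^\circ$ as soon as $\lambda>0$. For RP it maximizes a degree slack $\delta$ over the closed polytope $P(G)$ and then perturbs the optimum slightly toward $a$. It therefore needs only the standard separation oracle for $P(G)$, and the trick works for any polytope with a separation oracle and a known relative-interior point.

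\emph{Your route.} You describe $P(G)^\circ$ intrinsically. The implicit equalities are exactly the separators of the graphic matroid, i.e.\ unions of blocks, so $\dim P(G)=m-\#\text{blocks}$, and you put one slack $t$ on all remaining inequalities. This gives more structural information and avoids the determinant computations. The cost is a nonstandard separation problem, and that is where your sketch should be tightened.

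\emph{What to tighten.} Your block-wise oracle separates $x(E(U))+t\le |U|-1$ for $U\subsetneq V(B)$, $|U|\ge 2$. That is not literally the family ``$F$ not a union of blocks'', and for $t<0$ the block-level constraints do not imply the general ones. The two families do cut out the same region (together with $x_e\ge t$, $t\le 1$ and the affine hull) once you also impose $t\ge 0$. To see this, split a non-separator $F$ into its block parts $F_B$. Bound each proper part using the vertex sets of the nontrivial components of $(V(B),F_B)$. In the remaining case, where $(V(B),F_B)$ is connected but $F_B\ne E(B)$, use $x(E(B)\setminus F_B)\ge t$. Imposing $t\ge 0$ also keeps the min-cut capacities nonnegative and bounds the feasible region.

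Finally, singletons and $U=V(B)$ always have zero slack, so the cut computations must force two vertices into $U$ and one out. That means polynomially many (e.g.\ $O(n^3)$) minimum cuts rather than $O(n)$, which is still polynomial.
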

	
	\begin{proof}
		By~(\ref{eq:RN-RP-polytope}), \(G\) is RN if and only if there exists a point  \(x \in P(G)^\circ\)  such that  \(d_v(x) \leq 2 \) for every \(v \in V\). We first construct a rational point  \(a \in P(G)^\circ \) in polynomial time. Let \(\tau(G)\) denote the number of spanning trees of \(G\), and \(\tau_e(G)=\left| \left\{ T \in \mathcal{T}(G) : e \in T \right\} \right|\) denote the number of spanning trees containing \(e\).
		Define \[ a_e=\frac{\tau_e(G)}{\tau(G)}\qquad \text{for every } e \in E.\]
		 Equivalently, \[ a = \frac{1}{\tau(G)} \sum_{T \in \mathcal{T}(G)} \chi^T,\] 	where \(\frac{1}{\tau(G)}>0\).
		 Thus \(a\) is a strict convex combination of all the vertices of the spanning tree polytope \(P(G)\), and hence \(a \in P(G)^\circ\).
		By the matrix tree theorem, $\tau(G)$ and, for each edge $e$, $\tau_e(G)=\tau(G/e)$ can be computed exactly in polynomial time. Since these integers have polynomially bounded binary encoding lengths, the vector $a$ is a rational vector of polynomial encoding length and can be computed in polynomial time.
		
		We now formulate the RN decision problem as a linear programming (LP) problem.  The variables are \(w\in\R^E\) and \(\lambda\in\R\).  Consider
		\begin{equation}\label{eq:RN-LP}
			\begin{array}{rl}
				\text{maximize}   & \lambda \\[0.1cm]
				\text{subject to} & 0\le \lambda\le 1,\\[0.1cm]
				& w(E)=(1-\lambda)(n-1),\\[0.1cm]
				& w(F)\le (1-\lambda)r_G(F)\quad \text{for every }F\subseteq E,\\[0.1cm]
				& w_e\ge 0\quad \text{for every }e\in E,\\[0.1cm]
				& d_v(w)+\lambda d_v(a)\le 2\quad \text{for every }v\in V.
			\end{array}
		\end{equation}
        The constraints characterizing $w\in(1-\lambda)P(G)$ are precisely the scaled constraints defining the spanning tree polytope $P(G)$.
		Although the rank description of \(P(G)\) contains exponentially many inequalities, \(P(G)\), being the base polytope of the graphic matroid, has a polynomial time separation oracle \cite{CunninghamMP,SchrijverCP}. Here, a separation oracle is an algorithm which, given a rational vector \(x\), either certifies that \(x\in P(G)\) or returns a valid inequality for \(P(G)\) that is violated by \(x\).
		
		The scaled polytope \( (1-\lambda)P(G)\) also admits a polynomial time separation oracle.
	   If \(0\leq\lambda<1\), we apply the separation oracle to determine whether \(w/(1-\lambda)\in P(G)\). First check the necessary constraints
		\(w(E)=(1-\lambda)(n-1)\) and \(w_{e}\geq0\) for every \(e\in E\).
		If either condition fails, the corresponding violated equality or nonnegativity constraint separates \((w,\lambda)\) from the feasible region.
		Assume now that both conditions hold. Since \(1-\lambda>0\),
		\[
		w\in(1-\lambda)P(G)
		\quad\Longleftrightarrow\quad
		\frac{w}{1-\lambda}\in P(G).
		\]
        If $w/(1-\lambda)\notin P(G)$, the separation algorithm returns a subset $F\subseteq E$ such that
\[
\frac{w(F)}{1-\lambda}>r_G(F).
\]
It follows that $w(F)>(1-\lambda)r_G(F)$, so the valid scaled rank inequality
\(
w(F)\le(1-\lambda)r_G(F)
\)
is violated by the current point.

		If $\lambda = 1$, then $w(E) = 0$ and $w \ge 0$ imply $w = 0$.
		Consequently, all the scaled rank inequalities
	\(w(F) \le (1-\lambda) r_G(F) = 0\) are automatically satisfied.
		
		The remaining degree constraints \(d_v(w)+\lambda d_v(a)\leq 2 \) for every \( v\in V\) can be checked directly in polynomial time. Hence, the entire feasible region of (\ref{eq:RN-LP}) has a polynomial time separation oracle. Moreover, it is bounded, since
		$0 \le \lambda \le 1$, $w \ge 0$, and \(w(E) = (1-\lambda)(n-1) \le n-1\). Therefore, by the ellipsoid method and the optimization-separation principle, the feasibility and optimal value of (\ref{eq:RN-LP}) can be determined in polynomial time \cite{KhachiyanPA,GrotscheEM}.
		
	Next, we claim that $G$ is RN if and only if \eqref{eq:RN-LP} is feasible and its optimal value is positive.
		
		Suppose first that \eqref{eq:RN-LP} has a feasible solution with \(\lambda>0\).  If \(0<\lambda<1\), write \(z=w/(1-\lambda)\in P(G)\). Since \(w\in (1-\lambda)P(G)\), we have \(z\in P(G)\).
	    By the convexity property of the relative interior \cite[Theorem 6.1]{RockafellarCA}, it follows that:
		\[
		y=w+\lambda a=(1-\lambda)z+\lambda a \in P(G)^\circ.
		\]
		If \(\lambda=1\), the equality constraints imply \(w=0\), and hence \[
		y=w+\lambda a= a \in P(G)^\circ.
		\]
		Thus, in both cases, \(y\in P(G)^\circ\).
		
		According to the last degree constraint in equation \eqref{eq:RN-LP}:
		\[
		d_v(y)=d_v(w)+\lambda d_v(a)\le 2
		\quad\text{for every }v\in V.
		\]
		Consequently,
		\[y\in P(G)^\circ\cap \{x:d_v(x)\le 2\text{ for every }v\in V\}.\]
		By \eqref{eq:RN-RP-polytope}, \(G\) is RN.
		
		Conversely, suppose that \(G\) is RN.  Then there is a point \(y\in P(G)^\circ\) with \(d_v(y)\le 2\) for every \(v\).  Since $y,a \in  P(G)^\circ$, for sufficiently small \(\lambda>0\) the point
		\[
		z=\frac{y-\lambda a}{1-\lambda}
		=y+\frac{\lambda}{1-\lambda}(y-a)
		\in P(G).\]
		Let \(w=(1-\lambda)z=y-\lambda a\).  Then \(w\in (1-\lambda)P(G)\) and
		\(d_v(w)+\lambda d_v(a)=d_v(y)\le 2\).
		Thus \((w,\lambda)\) is feasible for \eqref{eq:RN-LP} with \(\lambda>0\).
		
		This proves the claim and shows that the decision problem for RN is solvable in polynomial time.
		
		We next consider RP and solve the following LP:
		\begin{equation}\label{eq:RP-LP}
			\begin{array}{rl}
				\text{maximize}   & \delta \\[0.1cm]
				\text{subject to} & x\in P(G),\\[0.1cm]
				& d_v(x)\le 2-\delta\quad \text{for every }v\in V.
			\end{array}
		\end{equation}
		This program is also solvable in polynomial time using the same separation oracle for \(P(G)\). Let \(\delta^*\) denote its optimal value. We claim that $G$ is RP if and only if $\delta^*>0$.
		
		If \(G\) is RP, then there exists a point \(x\in P(G)^\circ\) satisfying \(d_v(x)<2\) for every $v\in V$. At this point, set $\delta_0 = \min_{v \in V} \{2 - d_v(x)\}$. Then $(x, \delta_0)$ is feasible for (\ref{eq:RP-LP}), and hence $\delta^* \ge \delta_0 > 0$.
		
		Conversely, suppose that \(\delta^*>0\), and let \((x,\delta^\ast)\) be an optimal solution of (\ref{eq:RP-LP}). Then \(d_v(x) \le 2 - \delta^*\), \(v \in V\). We may choose \(0<\varepsilon<1\) sufficiently small that
		\[
		\varepsilon \max_{v \in V} |d_v(a) - d_v(x)| < \delta^*.
		\]
		Since $x \in P(G)$ and $a \in P(G)^\circ$, we have $y = (1-\varepsilon)x + \varepsilon a \in P(G)^\circ$. For every $v \in V$,
		\begin{align*}
			d_v(y)
			&= d_v(x) + \varepsilon (d_v(a) - d_v(x)) \\
			&\leq d_v(x) + \varepsilon|d_v(a) - d_v(x)| \\
			&\leq d_v(x) + \varepsilon \max_{v \in V} |d_v(a) - d_v(x)| \\
			&< 2 - \delta^* + \delta^* \\
			&= 2.
		\end{align*}
			Thus, $y \in P(G)^\circ$ and $d_v(y) < 2$ for every $v \in V$. By \eqref{eq:RN-RP-polytope}, $G$ is RP. Therefore, $G$ is RP if and only if $\delta^* > 0$, and RP graphs can be recognized in polynomial time.
		
		Finally, by definition, $G$ is $\mathrm{SRN}$ if and only if $G$ is $\mathrm{RN}$ and $G$ is not $\mathrm{RP}$. The polynomial time tests for RN and RP therefore yield a polynomial time test for SRN.
	\end{proof}
	
	\section{Vertices and Minimal Dilation Factors of \(\Theta(G)\)}
	
	This section gives a finite characterization of the vertices of \(\Theta(G)\).
	The key point is that all tight graphic matroid rank rows at a point are spanned by a chain of tight sets. We then combine this chain representation with the standard active constraint for vertices. Throughout this section, let \(G = (V, E)\) be a finite connected graph with \(n =|V|\).
	
	\subsection{Active systems}
	
	For \(F\subseteq E\), let \(\chi^F\in\{0,1\}^E\) be its incidence vector.  For \(v\in V\), let \(\chi^{E_{v}}\in\{0,1\}^E\) be the incidence vector of the set \(E_{v}\) of edges incident to \(v\).  If \(Z\subseteq E\) and \(f\in Z\), write \(e^f\in\{0,1\}^Z\) for the standard unit row vector corresponding to $f$, that is,
	\[
	(e^{f})_g =
	\begin{cases}
		1, & g = f, \\
		0, & g \ne f,
	\end{cases}
	\qquad \text{for every } g \in Z.
	\]
	
	A family \(\cL\subseteq 2^E\) is \emph{laminar} if for every \(A,B\in\cL\), the two sets are either disjoint or one contains the other.
	
	\begin{lemma}\label{lem:uncross}
		Let \(x\in P(G)\), and let
		\[
		\mathcal{T}_x=\{F\subseteq E:x(F)=r_G(F)\}
		\]
		be the family of edge sets whose rank inequalities are tight at \(x\).
		There exist nested tight sets \(\emptyset=C_0\subsetneq C_1\subsetneq \cdots\subsetneq C_q=E\), such that, for every $F \in \mathcal{T}_x$, there are real numbers
		$\alpha_1, \ldots, \alpha_q$ satisfying
		\[
		\chi^F = \sum_{i=1}^q \alpha_i \chi^{C_i}.
		\]
		Consequently,
		the incidence vectors $\chi^{C_1},\ldots,\chi^{C_q}$ span the incidence vectors of all rank constraints that are tight at \(x\). In particular, this vector space is spanned by the incidence vectors
		of a laminar subfamily of $\mathcal{T}_x$.
	\end{lemma}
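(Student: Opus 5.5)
The plan is to use the standard uncrossing argument for matroid base polytopes, based on submodularity of the graphic matroid rank function $r_G$. First we verify that $\mathcal{T}_x$ is closed under union and intersection. For $A,B\in\mathcal{T}_x$, modularity of $x$ and submodularity of $r_G$ give
\[
r_G(A)+r_G(B)=x(A)+x(B)=x(A\cup B)+x(A\cap B)\le r_G(A\cup B)+r_G(A\cap B)\le r_G(A)+r_G(B).
\]
Hence equality holds throughout. Since $x(A\cup B)\le r_G(A\cup B)$ and $x(A\cap B)\le r_G(A\cap B)$ by \eqref{eq:edmonds}, both of these inequalities are tight. Also $\emptyset\in\mathcal{T}_x$ trivially, and $E\in\mathcal{T}_x$ because $x(E)=n-1=r_G(E)$.

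Next, among all chains $\emptyset=C_0\subsetneq C_1\subsetneq\cdots\subsetneq C_q=E$ of sets in $\mathcal{T}_x$, choose one that is maximal with respect to inclusion. Let $W=\operatorname{span}\{\chi^{C_1},\dots,\chi^{C_q}\}$. Note that $W$ contains $\chi^{C_i\setminus C_{i-1}}=\chi^{C_i}-\chi^{C_{i-1}}$, so $W$ is exactly the space of vectors that are constant on each block $D_i=C_i\setminus C_{i-1}$. It therefore suffices to show that every $F\in\mathcal{T}_x$ is a union of blocks $D_i$. That is, $F$ does not split any $D_i$.

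This is the main step, and it uses maximality. Suppose $F\in\mathcal{T}_x$ meets $D_i$ but does not contain it. Set $S=C_{i-1}\cup(F\cap C_i)$. By closure under intersection, $F\cap C_i\in\mathcal{T}_x$, and by closure under union, $S\in\mathcal{T}_x$. We have $C_{i-1}\subsetneq S$ because $F$ meets $D_i$, and $S\subsetneq C_i$ because $F$ misses part of $D_i$. Inserting $S$ into the chain then contradicts maximality. Hence $F\cap D_i\in\{\emptyset,D_i\}$ for every $i$, which gives
\[
\chi^F=\sum_{i:D_i\subseteq F}\bigl(\chi^{C_i}-\chi^{C_{i-1}}\bigr),
\]
with $\chi^{C_0}=0$. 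Collecting coefficients yields the $\alpha_i$, and these are in fact integers in $\{-1,0,1\}$.

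The consequences follow directly. The span of the tight rank rows equals $W$, since each $C_i$ is itself tight and every tight row lies in $W$. A chain is a laminar family, so $W$ is spanned by a laminar subfamily of $\mathcal{T}_x$. The only delicate point is the closure argument, which relies on the fact that $x$ satisfies every rank inequality, i.e.\ on $x\in P(G)$ via \eqref{eq:edmonds}. I expect no real obstacle beyond stating submodularity of $r_G(F)=n-\kappa_G(F)$, which is the standard matroid rank property.
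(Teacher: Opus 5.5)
Your proposal is correct and follows essentially the same argument as the paper: closure of $\mathcal{T}_x$ under union and intersection via submodularity, a maximal chain from $\emptyset$ to $E$ in $\mathcal{T}_x$, and a contradiction with maximality by inserting $C_{i-1}\cup(F\cap C_i)$ whenever a tight set $F$ splits a block $C_i\setminus C_{i-1}$. Your explicit check that $\emptyset, E\in\mathcal{T}_x$, which guarantees that a maximal chain really runs from $\emptyset$ to $E$, is a small point the paper leaves implicit.
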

	
	\begin{proof}
		First note that \(\mathcal{T}_x\) is closed under union and intersection.  Let \(A,B\in\mathcal{T}_x\), then
		\[x(A)+x(B)=r_G(A)+r_G(B) \qquad
		\text{and}  \qquad
	x(A)+x(B)=x(A\cap B)+x(A\cup B).\]
		Since \(x\in P(G)\),
		\(x(A\cap B)\le r_G(A\cap B)\) and \(x(A\cup B)\le r_G(A\cup B)\).
		We therefore obtain:
		\begin{align*}
			r_G(A) + r_G(B) = x(A \cap B) + x(A \cup B) \le r_G(A \cap B) + r_G(A \cup B).
		\end{align*}
		By submodularity of the matroid rank function, \cite[Theorem 39.8]{SchrijverCP} gives:
		\[
		r_G(A\cap B)+r_G(A\cup B)\le r_G(A)+r_G(B).
		\]
	 Consequently, all of the above inequalities are equalities. It follows that
		\(A\cap B \in\mathcal{T}_x,\; A\cup B\in\mathcal{T}_x\).
		Since \(\mathcal{T}_x\) is a finite family of subsets closed under union and intersection, $(\mathcal{T}_x,\cap,\cup)$ is a finite distributive lattice \cite{GratzerDL}.
		
		 Choose a maximal chain in \(\mathcal{T}_x\), ordered by inclusion:
		\(\emptyset=C_0\subsetneq C_1\subsetneq \cdots\subsetneq C_q=E\).
		 Define \( B_i=C_i\setminus C_{i-1}\) for each \(i \in \{1, \dots , q\}\).
		Since the sets $C_0, C_1, \ldots, C_q$ are nested, the sets
		$B_1, \ldots, B_q$ are pairwise disjoint. Moreover, because
		$C_0 = \emptyset$ and $C_q = E$, we have
		\[
		\bigcup_{i=1}^q B_i = \bigcup_{i=1}^q (C_i \setminus C_{i-1})
		= C_q \setminus C_0 = E.
		\]
		Thus, the collection $\{B_1, \ldots, B_q\}$ forms a partition of $E$.
	
	We next show that, for every $F \in \mathcal{T}_x$ and every
	$i \in \{1, \ldots, q\}$, either $B_i \subseteq F$ or $B_i \cap F = \emptyset$.
	
	 Suppose, to the contrary, that for some \(i\), the set \(F\) contains some but not all elements of \(B_i\).  Define \(D=C_{i-1}\cup (F\cap C_i)\).
	since the lattice \(\mathcal{T}_x\) is closed under intersection and union, and \(F, C_{i}, C_{i-1}\in \mathcal{T}_x\), it follows that \(D \in \mathcal{T}_x\).
	Since \(F\cap B_i\neq\emptyset\) and \(B_i\nsubseteq F\), the set \(F\cap B_i\) is a nonempty proper subset of \(B_i\). Therefore, using \(D=C_{i-1}\cup(F\cap C_i)=C_{i-1}\cup(F\cap B_i)\), we obtain
	\(C_{i-1}\subsetneq D\subsetneq C_i\),
	which contradicts the maximality of the chain. Therefore, for each $i$, either $B_i \subseteq F$ or $B_i \cap F = \emptyset$. In other words, every \(F\in\mathcal{T}_x\) is a union of a subcollection of the \(B_i\), that is, there exists a set \(I\subseteq\{1,\ldots,q\}\) such that
    \(F = \bigcup_{i \in I} B_i\).
	
     As the blocks \(B_{i}\) are pairwise disjoint,  each \(\chi^F\) is a sum of some block vectors \(\chi^{B_i}\):
     \(\chi^F = \sum_{i \in I} \chi^{B_i}\).
     Moreover, since $B_i = C_i \setminus C_{i-1}$ and $C_{i-1} \subsetneq C_i$,
     \(\chi^{B_i} = \chi^{C_i} - \chi^{C_{i-1}}\),
     and hence,
     \(\chi^F = \sum_{i \in I} \left( \chi^{C_i} - \chi^{C_{i-1}} \right)\).
    Therefore, each \(\chi^F\) lies in the span of the chain vectors \(\chi^{C_1},\ldots,\chi^{C_q}\).

    Finally, the vectors \(\chi^{C_1},\ldots,\chi^{C_q}\) form a chain and the chain is laminar. Hence the vector space spanned by \(\{\chi^F:F\in\mathcal T_x\}\) is spanned by the incidence vectors of a laminar subfamily of \(\mathcal T_x\).

	\end{proof}
	
	\paragraph{Notation.} For $r_G(F) = n - \kappa_G(F)$, since $G$ is connected, $\kappa_G(E) = 1$, and hence $r_G(E) = n - 1$. Therefore, every point $x \in P(G)$ satisfies the global equality $x(E) = r_G(E) = n - 1$.
	
	In the chain
	\(\emptyset = C_0 \subsetneq C_1 \subsetneq \cdots \subsetneq C_q = E\),
	the final set $C_q = E$ corresponds to this global equality and will be treated separately. The initial set $C_0 = \emptyset$ corresponds to the trivial equality $x(\emptyset) = r_G(\emptyset) = 0$. Thus, $C_1, \ldots, C_{q-1}$ correspond to the proper nonempty tight rank constraints $x(C_i) = r_G(C_i)$ for $i = 1, \ldots, q - 1$.
	
	By the standard active constraint characterization of vertices of
	polyhedra (see \cite[Theorems 2.2 and 2.3]{BertTsitLP}), a point
	\(x\) in a nonempty polyhedron in \(\mathbb{R}^E\) is a vertex
	if and only if the normal vectors of the tight constraints at \(x\) span \(\mathbb{R}^E\). Equivalently, the tight constraints at
	\(x\) contain a subsystem whose coefficient matrix has rank \(|E|\);
	the corresponding system of equalities then has \(x\) as its unique
	solution. Therefore, \(x\in\Theta(G)\) is a vertex if and only if its
	tight constraints contain such a full-rank subsystem.
	
	\begin{theorem}\label{thm:theta-vertices}
		Let \(G=(V,E)\) be a connected graph.  A vector \(x\in\R^E\) is a vertex of \(\Theta(G)\) if and only if the following conditions hold:
		
		\begin{enumerate}[label=(\roman*)]
			\item  \(x\in P(G)\) and \(d_v(x)\le 2\) for every \(v\in V\).
			\item There exist a laminar family $\cL\subseteq 2^E\setminus\{\emptyset, E\}$, $Z\subseteq E$, and $ D\subseteq V$ such that \(x\) is the unique solution of the system:
			\begin{equation}\label{eq:active-system}
				\begin{aligned}
					x(E)&=n-1,\\
					x(F)&=r_G(F)       &&(F\in\cL),\\
					x_f&=0             &&(f\in Z),\\
					d_v(x)&=2          &&(v\in D).
				\end{aligned}
			\end{equation}
		\end{enumerate}
	\end{theorem}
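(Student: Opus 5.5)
We argue both implications from the active-constraint characterization of vertices recalled just before the statement, applied to the description \eqref{eq:TMD-polytope}. That is, we view \(\Theta(G)\) as the polyhedron in \(\R^E\) cut out by
\[
x(E)=n-1,\qquad x(F)\le r_G(F)\ (F\subseteq E),\qquad x_e\ge 0\ (e\in E),\qquad d_v(x)\le 2\ (v\in V).
\]
The rank rows for \(F=\emptyset\) (the zero row) and for \(F=E\) (a duplicate of the equality) add nothing, so the relevant tight rows at a point \(x\) are four kinds: the equality \(\chi^E\), the rows \(\chi^F\) for tight proper nonempty \(F\), the unit rows \(e^f\) for \(x_f=0\), and the rows \(\chi^{E_v}\) for \(d_v(x)=2\).

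\emph{Sufficiency.} Assume (i) and (ii). By (i), \(x\in\Theta(G)\). Every equation of \eqref{eq:active-system} is one of the constraints above, tight at \(x\): the rank equalities for \(F\in\cL\), the nonnegativity equalities for \(f\in Z\), and the degree equalities for \(v\in D\), together with the normalization. Since \(x\) is the unique solution of \eqref{eq:active-system}, its coefficient matrix has rank \(|E|\). So the tight constraints at \(x\) contain a full-rank subsystem, and \(x\) is a vertex.

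\emph{Necessity.} Let \(x\) be a vertex. Condition (i) holds because \(x\in\Theta(G)\), using \eqref{eq:TMD-polytope}. By the active-constraint characterization, the tight rows at \(x\) span \(\R^E\). Put
\[
Z=\{f: x_f=0\},\qquad D=\{v: d_v(x)=2\}.
\]
Apply Lemma~\ref{lem:uncross}: every tight rank row \(\chi^F\), \(F\in\mathcal T_x\), lies in the span of the chain vectors \(\chi^{C_1},\dots,\chi^{C_q}\), where \(C_q=E\) and each \(C_i\) is tight. Let \(\cL=\{C_1,\dots,C_{q-1}\}\). This is a chain, hence laminar, and it avoids \(\emptyset\) and \(E\). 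Replacing all tight rank rows by \(\chi^E\) together with \(\{\chi^{C}:C\in\cL\}\) leaves the row span unchanged. Hence the rows of \eqref{eq:active-system} still span \(\R^E\), and all of its equations hold at \(x\) because the \(C_i\) are tight. A linear system whose coefficient matrix has rank \(|E|\) has at most one solution, so \(x\) is its unique solution.

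The main step is the reduction of the exponentially many tight rank rows to a laminar subfamily without losing rank. Lemma~\ref{lem:uncross} supplies exactly this, so the remaining work is bookkeeping:
\begin{itemize}[label={}]
\item checking that the chain indeed spans all of \(\mathcal T_x\);
\item separating \(C_q=E\) into the normalization row and discarding \(C_0\);
\item noting that the vertex criterion applies to a polyhedron with infinitely described but finite constraint families, which is fine since \(2^E\) is finite.
\end{itemize}
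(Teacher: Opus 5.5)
Your proposal is correct and follows the paper's proof essentially verbatim in the necessity direction: you use Lemma~\ref{lem:uncross} to replace all tight rank rows by $\chi^E$ together with the chain $C_1,\dots,C_{q-1}$, and take $Z$ and $D$ to be all active nonnegativity and degree constraints. For sufficiency you invoke the rank form of the active-constraint criterion, whereas the paper argues from the extreme-point definition (if $x=\lambda y+(1-\lambda)z$ with $y,z\in\Theta(G)$, tightness forces $y,z$ to satisfy \eqref{eq:active-system}, so $y=z=x$); these are equivalent standard facts, so the difference is only cosmetic.
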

	
	\begin{proof}
		 From the definition \eqref{eq:TMD-polytope}, condition \((i)\) clearly holds. We use the standard fact that a point of a polytope in \(\R^E\) is a vertex if and only if the normals of the active constraints at that point span \(\R^E\).
		
		Suppose first that \(x\) is a vertex of \(\Theta(G)\).  Then the normal vectors of the constraints active at \(x\) span \(\mathbb R^E\). Equivalently, the active constraints at \(x\) contain \(|E|\) linearly independent coefficient rows.
		Since \(G\) is connected, every point \(y\in P(G)\) satisfies \(y(E) = n-1\).
		Let
		\[
		\mathcal{T}_x=\{F\subseteq E:x(F)=r_G(F)\}.
		\]
		By Lemma \ref{lem:uncross}, there exists a chain  \(\emptyset=C_0\subsetneq C_1\subsetneq \cdots\subsetneq C_q=E\) of \(\mathcal{T}_x\) such that the vectors \(\chi^{C_1},\ldots,\chi^{C_q}\) span the incidence vectors of all tight rank constraints at \(x\).
		
		Let
	\(\mathcal L=\{C_1,\ldots,C_{q-1}\}\),
		which is a laminar subfamily of \(2^E\setminus\{\emptyset,E\}\).
		The final member \(C_{q} = E\) is represented separately by the global equality \(x(E) = n-1\).
		
		Define
		\[Z= \{f\in E: x_{f} =0\}\qquad \text{and} \qquad D = \{v\in V: d_{v}(x)=2\},\]
		where \(Z\) records all nonnegativity constraints active at \(x\), while \(D\) records all degree constraints active at \(x\).
		The coefficient vectors corresponding to \(\mathcal L\), \(Z\), and \(D\), together with \(\chi^{E}\), span all active constraint normals at \(x\).
		Since the active constraint normals at \(x\) span \(\mathbb{R}^E\),
		these coefficient vectors also span \(\mathbb{R}^E\).
		 Therefore, the coefficient matrix of (\ref{eq:active-system}) has rank \(|E|\). Moreover, since $x$ satisfies \eqref{eq:active-system}, it is the unique solution of the system.
	
		Conversely, suppose that \(x\) satisfies \((i)\) and \((ii)\). The \((i)\) implies that \(x \in \Theta(G)\). Moreover, every equation in \eqref{eq:active-system} corresponds to a tight constraint of \(\Theta(G)\) that is active at \(x\): the defining
		equality of \(P(G)\), the tight rank constraints, the tight
		nonnegativity constraints indexed by \(Z\), and the tight degree
		constraints indexed by \(D\).
		
		
		Suppose that
		\(x=\lambda y+(1-\lambda)z \)
		for some \(y,z\in\Theta(G)\) and \(0<\lambda<1\). Since all equations
		in \eqref{eq:active-system} correspond to constraints that are tight
		at \(x\), both \(y\) and \(z\) also satisfy
		\eqref{eq:active-system}. By the uniqueness of the solution to
		\eqref{eq:active-system}, we obtain \(y=z=x\). Therefore, \(x\) is a vertex of \(\Theta(G)\).
	\end{proof}
	
	\subsection{Minimal Dilation Factors of \(\Theta(G)\) to Lattice Polytope}
	
	For a rational vector \(x\), let \(\den(x)\) denote the least positive integer \(p\) such that \(px\in\Z^E\).  A polytope is called a \emph{lattice polytope} if its vertices are integer points.
	Suppose that $\Theta(G) \neq \emptyset$. We define minimal dilation factors
	\[
	k_G = \operatorname{lcm}\{\operatorname{den}(x) : x \in V(\Theta(G))\},
	\] where \(\operatorname{lcm}\) denotes the least common multiple.
	Equivalently, $k_G$ is the least positive integer $k$ such that $k\Theta(G)$ is a lattice polytope, where $\Theta(G) \neq \emptyset$.
\begin{corollary}\label{cor:kG}
Let $G = (V, E)$ be connected with $|E|=m$.
Let \(\mathcal C\) denote the collection of any set of \(m\) equations, with linearly independent coefficient vectors, selected from a system of the form \eqref{eq:active-system}. Then
\begin{equation}\label{eq:kG}
k_G = \min\{k \in \mathbb N : k\Theta(G) \text{ is a lattice polytope}\}
= \operatorname{lcm}\{\operatorname{den}(x_{\mathcal C}) : x_{\mathcal C} \in \Theta(G)\}.
\end{equation}
\end{corollary}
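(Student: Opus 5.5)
The plan is to prove the two equalities in \eqref{eq:kG} separately, using Theorem~\ref{thm:theta-vertices} to identify the vertex set $V(\Theta(G))$ with the set of points $x_{\mathcal C}$. Here $x_{\mathcal C}$ is the unique solution of a choice $\mathcal C$ of $m$ linearly independent equations from a system of the form \eqref{eq:active-system}, subject to $x_{\mathcal C}\in\Theta(G)$. Throughout we assume $\Theta(G)\neq\emptyset$. Since $\Theta(G)$ is a polytope (bounded, because $P(G)$ is) defined by finitely many rational inequalities, it has finitely many vertices, all rational. Hence $k_G$ is a well-defined positive integer.

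\textbf{First equality.} We show that $\operatorname{lcm}\{\den(x):x\in V(\Theta(G))\}$ is the least $k$ with $k\Theta(G)$ a lattice polytope. The vertices of $k\Theta(G)$ are exactly $kx$ for $x\in V(\Theta(G))$, because dilation by $k>0$ is a linear bijection preserving extreme points. Thus $k\Theta(G)$ is a lattice polytope if and only if $kx\in\Z^E$ for every vertex $x$. For a single rational vector $x$, the condition $kx\in\Z^E$ holds if and only if $\den(x)\mid k$. To see this, write each coordinate in lowest terms; then $\den(x)$ is the lcm of the coordinate denominators, and $k x_e\in\Z$ if and only if that denominator divides $k$. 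So the admissible $k$ are exactly the common multiples of all $\den(x)$, and the least one is their lcm.

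\textbf{Second equality.} We show that the set $\{x_{\mathcal C}\in\Theta(G)\}$ coincides with $V(\Theta(G))$.
\begin{itemize}
\item If $x$ is a vertex, Theorem~\ref{thm:theta-vertices}(ii) gives a system \eqref{eq:active-system} whose coefficient matrix has rank $m$ and whose unique solution is $x$. Choose $m$ linearly independent rows from it; this is an admissible $\mathcal C$. Its unique solution is still $x$, since $x$ satisfies those rows and the $m\times m$ system is nonsingular. So $x=x_{\mathcal C}$, and $x_{\mathcal C}\in\Theta(G)$.
\item Conversely, let $x_{\mathcal C}\in\Theta(G)$. Then condition (i) of Theorem~\ref{thm:theta-vertices} holds. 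The subsystem $\mathcal C$ is itself of the form \eqref{eq:active-system}: its rank sets form a sub-laminar family, the equation $x(E)=n-1$ can be adjoined since $x_{\mathcal C}\in P(G)$ satisfies it, and together with its $Z$ and $D$ it has $x_{\mathcal C}$ as unique solution, because full rank is preserved when an equation is added. So condition (ii) holds, and $x_{\mathcal C}$ is a vertex.
\end{itemize}
Hence both index sets of the lcm agree, which gives the second equality.

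The main obstacle is interpretive rather than technical. One must read ``a system of the form \eqref{eq:active-system}'' so that the chosen $m$ rows may omit $x(E)=n-1$, or may come from arbitrary laminar $\cL$, $Z$, $D$ that are not a priori tight. The membership condition $x_{\mathcal C}\in\Theta(G)$ is exactly what guarantees that every equation used is a valid constraint of $\Theta(G)$ that is tight at $x_{\mathcal C}$. That tightness is what the converse direction of Theorem~\ref{thm:theta-vertices} needs, so I will state this point carefully. Finally, Cramer's rule shows that each $\den(x_{\mathcal C})$ divides the absolute value of the determinant of the corresponding integer $m\times m$ matrix. This gives finiteness and an a priori bound on $k_G$, though neither is needed for the equality itself.
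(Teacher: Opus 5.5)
Your proposal is correct and follows the same route as the paper: use Theorem~\ref{thm:theta-vertices} to identify the feasible solutions $x_{\mathcal C}\in\Theta(G)$ with the vertices of $\Theta(G)$, then take the lcm of their denominators. You additionally spell out two points the paper leaves implicit. First, $k\Theta(G)$ is a lattice polytope exactly when every $\den(x)$, $x$ a vertex, divides $k$. Second, a feasible $x_{\mathcal C}$ still satisfies condition (ii) after $x(E)=n-1$ is adjoined to its subsystem.
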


	\begin{proof}
		For each such collection $\mathcal C$, write the corresponding system as $A_{\mathcal C} x = b_{\mathcal C}$. Since \(A_{\mathcal C}\) is an \(m\times m\) matrix of rank \(m\), it is nonsingular. Hence, the subsystem has the unique solution
		\[
		x_{\mathcal C}:=A_{\mathcal C}^{-1}b_{\mathcal C}.
		\] By Theorem \ref{thm:theta-vertices}, the feasible vectors \(x_{\mathcal C}\) obtained from the nonsingular systems $A_{\mathcal C} x = b_{\mathcal C}$
		are precisely the vertices of \(\Theta(G)\), possibly with repetitions. Therefore, \(\operatorname{lcm}\{\operatorname{den}(x_{\mathcal C}) : x_{\mathcal C} \in \Theta(G)\}\) is equal to \(\operatorname{lcm}\{\operatorname{den}(x) : x \in V(\Theta(G))\}\). By the definition of \(k_{G}\), this is exactly the least positive
		integer \(k\) such that \(k\Theta(G)\) is a lattice polytope.
	\end{proof}
	
	\begin{example}\label{ex:denominator-three}
		 The following example shows that a vertex of the TDM polytope may have
		 denominator three. Let \(G\)  be the graph with vertex set \(\{0,1,\ldots,8\}\) and edge set $E=\{e_{1},\dots,e_{14}\}$, where the edges are ordered
		 as follows:
		\[
		\begin{array}{c|cccccccccccccc}
			i&1&2&3&4&5&6&7&8&9&10&11&12&13&14\\ \midrule
			e_i&01&08&12&14&23&34&36&37&45&46&47&56&67&78
		\end{array}
		\]
		Consider a point with denominator three:
		\[
		x=\left(
		0,1,\frac{2}{3},1,1,\frac{1}{3},\frac{1}{3},\frac{1}{3},\frac{2}{3},0,0,1,\frac{2}{3},1
		\right).
		\]
		First, we show that \(x\in\Theta(G)\). Let $T_1,T_2,T_3$ be the subgraphs of $G$ with edge sets
		\begin{align*}
			E(T_1) &= \{e_2, e_3, e_4, e_5, e_7, e_8, e_{12}, e_{14}\}, \\
			E(T_2) &= \{e_2, e_3, e_4, e_5, e_9, e_{12}, e_{13}, e_{14}\}, \\
			E(T_3) &= \{e_2, e_4, e_5, e_6, e_9, e_{12}, e_{13}, e_{14}\}.
		\end{align*}
		The graphs \(T_1, T_2, T_3\) are shown in
		Figure~\ref{fig:example}. It is clear from the figure that
		\(T_1, T_2, T_3\) are spanning trees of \(G\).
		\begin{figure}[htbp]
			\centering
			\includegraphics[width=0.8\textwidth,
			trim={0 3cm 0 3cm},
			clip]{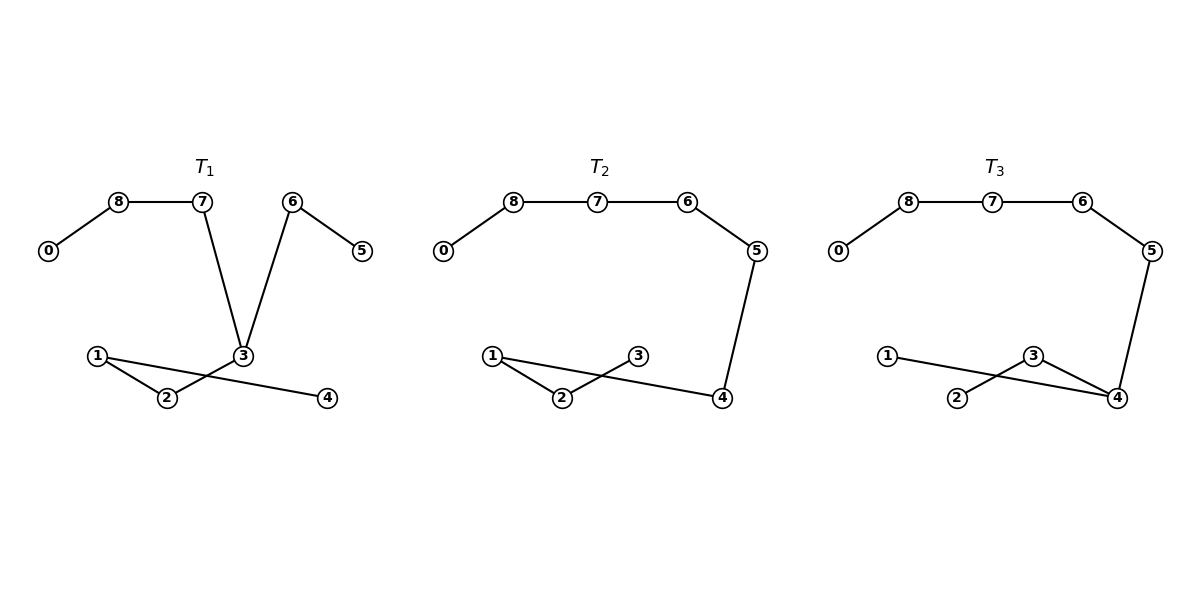}
			\caption{The spanning trees \(T_{1}\), \(T_{2}\), \(T_{3}\).}
			\label{fig:example}
		\end{figure}
	
	Moreover, the incidence vectors of the three spanning trees are:
		\begin{align*}
			\chi^{T_1} &= \{0,1,1,1,1,0,1,1,0,0,0,1,0,1\}, \\
			\chi^{T_2} &= \{0,1,1,1,1,0,0,0,1,0,0,1,1,1\}, \\
			\chi^{T_3} &= \{0,1,0,1,1,1,0,0,1,0,0,1,1,1\}.
		\end{align*}
		Therefore \(x=\frac{1}{3}(\chi^{T_1}+\chi^{T_2}+\chi^{T_3})\), and hence $x\in P(G)$.
		
		The degree vector of \(x\) is
		\((1,\frac{5}{3},\frac{5}{3},2,2,\frac{5}{3},2,2,2)\),
		so \(d_{v}(x)\leq 2\) for every \(v\in V(G)\). Therefore $x\in\Theta(G)$.
		
		We next prove that $x$ is a vertex of \(\Theta(G)\). For $S\subseteq V$, let \(E(S)\) denote the set of edges of \(G\) with both endvertices in \(S\). Consider the following fourteen constraints that are active at \(x\):
		\begin{table}[htbp]
			\centering
			\begin{tabular}{l l}
				\toprule
				Type & Active equations (see (\ref{eq:active-system})) \\
				\midrule
				\(x(E)=n-1\) & \( x(E) = 8 \) \\
				\midrule
				\(x(E(S))=|S|-1\) & \( x(E(\{0, 8\})) = 1, \quad x(E(\{1, 4\})) = 1, \) \\
				& \( x(E(\{2, 3\})) = 1, \quad x(E(\{5, 6\})) = 1, \) \\
				& \( x(E(\{1, 2, 3, 4\})) = 3 \) \\
				\midrule
				\(x_f=0\) & \( x_{01} = x_{46} = x_{47} = 0 \) \\
				\midrule
				\(d_v(x)=2\) & \( d_3(x) = d_4(x) = d_6(x) = d_7(x) = d_8(x) = 2 \) \\
				\bottomrule
			\end{tabular}
		\end{table}
		
		Let \(A\) be the coefficient matrix of these fourteen active constraints, where the rows are ordered as
		displayed above and the columns are ordered according to $e_1,\ldots,e_{14}$:
		{\scriptsize
			\[
			\setlength{\arraycolsep}{2pt}
			A=\left(\begin{array}{rrrrrrrrrrrrrr}
				1&1&1&1&1&1&1&1&1&1&1&1&1&1\\
				0&1&0&0&0&0&0&0&0&0&0&0&0&0\\
				0&0&0&1&0&0&0&0&0&0&0&0&0&0\\
				0&0&0&0&1&0&0&0&0&0&0&0&0&0\\
				0&0&0&0&0&0&0&0&0&0&0&1&0&0\\
				0&0&1&1&1&1&0&0&0&0&0&0&0&0\\
				1&0&0&0&0&0&0&0&0&0&0&0&0&0\\
				0&0&0&0&0&0&0&0&0&1&0&0&0&0\\
				0&0&0&0&0&0&0&0&0&0&1&0&0&0\\
				0&0&0&0&1&1&1&1&0&0&0&0&0&0\\
				0&0&0&1&0&1&0&0&1&1&1&0&0&0\\
				0&0&0&0&0&0&1&0&0&1&0&1&1&0\\
				0&0&0&0&0&0&0&1&0&0&1&0&1&1\\
				0&1&0&0&0&0&0&0&0&0&0&0&0&1
			\end{array}\right),
			\]
		}
		
		Its determinant is $-3$. Hence, \(A\) is nonsingular, and the coefficient vectors of the fourteen active constraints are linearly independent. Since \(|E|=14\), these vectors span \(\mathbb R^E\). Therefore, by Theorem \ref{thm:theta-vertices}, \(x\) is a vertex of \(\Theta(G)\).
		
		 Since \(x\) has denominator three, the dilation factor \(k\) such that
		 \(k\Theta(G)\) is a lattice polytope must be divisible by three.
	\end{example}
	
	\section{The Cartesian Product of Paths}

In this section, $G\square G'$ denotes the Cartesian product of two graphs $G$ and $G'$. Let $\mathbf n=(n_1,\ldots,n_d)$, where $d\ge2$ and $n_i\ge2$ for every $i\in\{1,\ldots,d\}$, and define $B(\mathbf n)=P_{n_1}\square\cdots\square P_{n_d}$. Each path $P_{n_i}$ is called a factor of $B(\mathbf n)$. The vertex set of $B(\mathbf n)$ is
\[
V(B(\mathbf n))=\{(a_1,\ldots,a_d):1\le a_i\le n_i\text{ for every }i\in\{1,\ldots,d\}\},
\]
and two vertices are adjacent if they differ by one in exactly one coordinate and agree in all other coordinates. In particular, when $d=2$, the graph $B(\mathbf n)$ is called a grid graph. For related studies of resistance curvature on Cartesian products of graphs, see \cite{Daw,DevriendtRN}.
	
    \subsection{Hamiltonian paths and common intervals}
	
	For a coordinate \(i\), an \emph{\(i\)-row} is a set of all vertices obtained by fixing all coordinates except the \(i\)-th coordinate and allowing only the \(i\)-th coordinate to vary. That is, an \(i\)-row is a set of the form
	\(
\{(a_1,\ldots,a_{i-1},k,a_{i+1},\ldots,a_d):1\le k\le n_i\},
\)
where $a_j\in\{1,\ldots,n_j\}$ is fixed for every $j\neq i$.
	
	A subset \(S\) of the vertices of a path \(H\) is called an \emph{interval} of \(H\) if the vertices of \(S\) occur consecutively along \(H\).  Equivalently, the subgraph of \(H\) induced by \(S\) has exactly \(|S|-1\) edges. We say that \(S\) is \emph{saturated} in the \(i\)-direction if every \(i\)-row meeting \(S\) is entirely contained in \(S\).
	
	\begin{lemma}\label{lem:H-path}
		\cite{LiYeZhangIS} For any positive integers \(d\) and \(n_{1}, \dots , n_{d}\), the Cartesian product \(P_{n_1}\square\cdots\square P_{n_d}\) has a Hamiltonian path.
	\end{lemma}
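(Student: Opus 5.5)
Existence of a Hamiltonian path in $P_{n_1}\square\cdots\square P_{n_d}$ follows by induction on $d$ using a boustrophedon (snake) construction. We prove the slightly stronger statement that there is a Hamiltonian path starting at the corner $(1,\ldots,1)$. For $d=1$ the path $P_{n_1}$ itself, traversed $1,2,\ldots,n_1$, works (and the case where some $n_i=1$ is trivial, since that factor can be dropped).

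For the inductive step, write $B=B'\square P_{n_d}$, where $B'=P_{n_1}\square\cdots\square P_{n_{d-1}}$. By the inductive hypothesis, $B'$ has a Hamiltonian path $H'=(u_1,u_2,\ldots,u_N)$ with $N=n_1\cdots n_{d-1}$ and $u_1=(1,\ldots,1)$. The copies $B'\times\{k\}$ for $k=1,\ldots,n_d$ are the layers of $B$. We build $H$ by traversing layer $1$ along $H'$ forward, then layer $2$ along $H'$ backward, then layer $3$ forward, and so on, alternating direction. Explicitly, $H$ visits
\[
(u_1,1),\ldots,(u_N,1),(u_N,2),\ldots,(u_1,2),(u_1,3),\ldots,(u_N,3),\ldots
\]
Within a layer, consecutive vertices $(u_j,k),(u_{j+1},k)$ are adjacent because $u_ju_{j+1}$ is an edge of $B'$ and the $d$-th coordinate agrees. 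Each transition between layers joins $(u,k)$ to $(u,k+1)$ with the same $u\in\{u_1,u_N\}$, and these differ by one in only the $d$-th coordinate, so they are adjacent in $B$.

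Every vertex $(u_j,k)$ appears exactly once, since each layer is traversed completely exactly once and the layers partition $V(B)$. Hence $H$ is a Hamiltonian path of $B$, and it starts at $(u_1,1)=(1,\ldots,1)$, which preserves the strengthened inductive hypothesis.

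No step is a real obstacle. The only point needing care is that the boustrophedon construction requires only that the path $H'$ is reused in reverse, so no condition on where $H'$ ends is needed. The corner-start strengthening is not strictly necessary, but it keeps the induction statement clean.
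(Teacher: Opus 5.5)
Your boustrophedon induction is correct. It also needs no special treatment for factors with $n_i=1$, since a single layer or a one-vertex $B'$ causes no problem. The paper does not prove this lemma itself but cites \cite{LiYeZhangIS}; your argument is exactly the ``usual recursive snake construction'' it alludes to, and it is the same alternately-reversed gluing the paper carries out in the proof of Lemma~\ref{lem:opposite-snakes}, with the roles of the two factors swapped.
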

	
		\begin{lemma}\label{lem:opposite-snakes}
		For every coordinate \(i\in \{1,\dots, d\}\), there are two Hamiltonian paths \(\overrightarrow{P}_{i}\) and \(\overleftarrow{P}_{i}\) in \(B(\mathbf n)\) and an ordering $
		L_1,L_2,\ldots,L_q$ of the \(i\)-rows, where $ q=\prod_{j\ne i}n_j$, such that the following properties hold.
		\begin{enumerate}[label=(\roman*)]
			\item For every \(t \in \{1, . . . , q\}\), all vertices of \(L_{t}\) occur consecutively along each of \(\overrightarrow{P}_{i}\) and \(\overleftarrow{P}_{i}\).
			\item The two paths \(\overrightarrow{P}_{i}\) and \(\overleftarrow{P}_{i}\) traverse the \(i\)-row in the same order \(L_{1}, \dots , L_{q}\).
			\item For each \(t\in\{1,\ldots,q\}\), \(\overrightarrow{P}_{i}\) and \(\overleftarrow{P}_{i}\) traverse the vertices of \(L_t\) in opposite orders.
		\end{enumerate}
	\end{lemma}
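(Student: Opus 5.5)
We prove the lemma by contracting each $i$-row to a point and walking a Hamiltonian path in the resulting quotient grid. Fix $i$, and let $Q$ be the Cartesian product of the remaining factors $P_{n_j}$, $j\neq i$. The $i$-rows are in bijection with the vertices of $Q$: the row $L(b)$ corresponds to $b=(a_j)_{j\ne i}$. Two rows $L(b),L(b')$ are joined by a ``parallel'' perfect matching of edges, $(\ldots,k,\ldots)\sim(\ldots,k,\ldots)$ for all $k$, exactly when $b\sim b'$ in $Q$.

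If $d\ge 2$, we apply Lemma~\ref{lem:H-path} to $Q$, which is itself a product of paths, to obtain a Hamiltonian path $b_1,b_2,\ldots,b_q$ of $Q$ with $q=\prod_{j\ne i}n_j$. We then set $L_t=L(b_t)$. If $Q$ is trivial ($d=1$), then $q=1$ and we simply take the path and its reverse; this case is not needed here, since $d\ge2$.

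The first path $\overrightarrow{P}_i$ is a boustrophedon snake. It traverses $L_1$ in increasing $i$-coordinate from $1$ to $n_i$, then $L_2$ in decreasing order, then $L_3$ in increasing order, and so on, alternating direction. The transition from $L_t$ to $L_{t+1}$ uses the edge between the endpoint of $L_t$ with $i$-coordinate $k\in\{1,n_i\}$ and the vertex of $L_{t+1}$ with the same $i$-coordinate $k$. This edge exists because $b_t\sim b_{t+1}$ in $Q$. The next row is then traversed starting from coordinate $k$, so the alternation is consistent.

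The second path $\overleftarrow{P}_i$ is the same construction with every direction flipped. It traverses $L_1$ decreasingly, $L_2$ increasingly, and so on, again joining consecutive rows at the common endpoint coordinate.

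The verification is routine. Each path visits every vertex exactly once, since the rows partition $V(B(\mathbf n))$, each row is covered by its own $i$-edges, and consecutive rows are joined by a valid edge. This gives property (i). Both paths list the rows in the order $L_1,\ldots,L_q$, which is property (ii). In row $t$ the first path goes increasingly exactly when $t$ is odd and the second exactly when $t$ is even, so the two traverse each $L_t$ in opposite orders, which is property (iii).

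The only point requiring care is that consecutive rows in the order really are adjacent via matching coordinates. This is exactly why we take a Hamiltonian path of the quotient $Q$ from Lemma~\ref{lem:H-path} rather than an arbitrary ordering of the rows. The endpoint coordinates $1$ and $n_i$ then match automatically under the alternation.
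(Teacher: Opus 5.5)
Your proposal is correct and follows essentially the same route as the paper. Both contract the $i$-rows to the quotient $\prod_{j\ne i}P_{n_j}$, take a Hamiltonian path of that quotient to order the rows, and build the alternating snake and its row-by-row reversal, joining consecutive rows at the shared endpoint coordinate $1$ or $n_i$.
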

	
	\begin{proof}
		Contract each \(i\)-row to a vertex.  The resulting quotient graph is isomorphic to
		$G_{i}=\prod_{j\ne i} P_{n_j}$, which has $ q=\prod_{j\ne i}n_j$ vertices and admits a Hamiltonian path by the usual recursive snake construction. Each vertices of the \(G_{i}\) corresponds to an $L_t$ of $i$-row.
		
		Let \(v_1,\ldots,v_q\) be the vertices of a Hamiltonian path in \(G_i\), where
		\(v_t=
		\bigl(a_1^{(t)},\ldots,a_{i-1}^{(t)},
		a_{i+1}^{(t)},\ldots,a_d^{(t)}\bigr)\),
		and let \(L_1,\ldots,L_q\) be the corresponding \(i\)-rows, where
		\(L_t = \{\ell_t(s) : 1 \leq s \leq n_i\}\) and define \(
		\ell_t(s):=
		\bigl(a_1^{(t)},\ldots,a_{i-1}^{(t)},s,
		a_{i+1}^{(t)},\ldots,a_d^{(t)}\bigr)\) with \(1\le s\le n_i\).
		If \(v_t\) and \(v_{t+1}\) are adjacent in \(G_i\), then they differ
		by one in exactly one coordinate \(j\neq i\) and agree in all other
		coordinates. For every \(s\in\{1,\ldots,n_i\}\), the
		vertices \(\ell_t(s)\) and \(\ell_{t+1}(s)\) differ by one only in
		their \(j\)-th coordinate, hence are adjacent in \(B(\mathbf n)\).
		In particular, the two consecutive \(i\)-rows \(L_t\) and
		\(L_{t+1}\) can be connected through the pairs
		\(\ell_t(1),\ell_{t+1}(1)\) or \(\ell_t(n_i),\ell_{t+1}(n_i)\).

		Define \(\overrightarrow{P}_{i}\) by traversing \(L_t\) in the order
		\(\ell_t(1), \ell_t(2), \ldots, \ell_t(n_i)\)
		when \(t\) is odd, and in the reverse order
		\(\ell_t(n_i), \ell_t(n_i-1), \ldots, \ell_t(1)\) when \(t\) is
		even.
		Consecutive vertices
		within each \(L_{t}\) are adjacent, since their \(i\)-th coordinates differ by one and all other coordinates are equal. Moreover,
		if \(t\) is odd, the traversal of \(L_t\) ends at \(\ell_t(n_i)\) and
		that of \(L_{t+1}\) begins at \(\ell_{t+1}(n_i)\); if \(t\) is even,
		the corresponding vertices are \(\ell_t(1)\) and
		\(\ell_{t+1}(1)\). In either case, these two vertices \(\ell_t(n_i)\) and \(\ell_{t+1}(n_i)\) (or \(\ell_t(1)\) and \(\ell_{t+1}(1)\)) are adjacent.
		Hence the traversals of \(L_1,\ldots,L_q\) concatenate to form a path.
	   Since the \(i\)-rows \(L_1,\ldots,L_q\) are pairwise disjoint and
		partition \(V(B(\mathbf n))\), this path visits every vertex exactly
		once. Therefore, \(\overrightarrow{P}_{i}\) is a Hamiltonian path.
		
		We next construct \(\overleftarrow{P}_{i}\) using the same order \(L_1,\ldots,L_q\), but
		reverse the traversal direction on every \(i\)-row. Hence \(L_t\) is
		traversed from \(\ell_t(n_i)\) to \(\ell_t(1)\) when \(t\) is odd, and
		from \(\ell_t(1)\) to \(\ell_t(n_i)\) when \(t\) is even. By the same
		argument as above, these traversals concatenate to form a Hamiltonian
		path \(\overleftarrow{P}_{i}\).
		
		By construction, in each of the paths \(\overrightarrow{P}_{i}\) and \(\overleftarrow{P}_{i}\), the
		vertices of every \(L_t\) appear consecutively. Moreover,
		the two paths traverse the \(i\)-rows in the same order
		\(L_1,\ldots,L_q\), but traverse the vertices within each \(L_t\) in
		opposite orders. Therefore, \(\overrightarrow{P}_{i}\) and \(\overleftarrow{P}_{i}\) satisfy
		conditions \emph{(i)--(iii)}.
	\end{proof}
	
	\begin{example}
	The two Hamiltonian paths \(\overrightarrow{P}_{i}\) and \(\overleftarrow{P}_{i}\) in the grid graph \(P_3\square P_4\) are illustrated in Figure~\ref{fig:path}, where \(i=2\).
		\begin{figure}[htbp]
			\centering
			\begin{minipage}{0.48\textwidth}
				\centering
				\includegraphics[width=\textwidth]{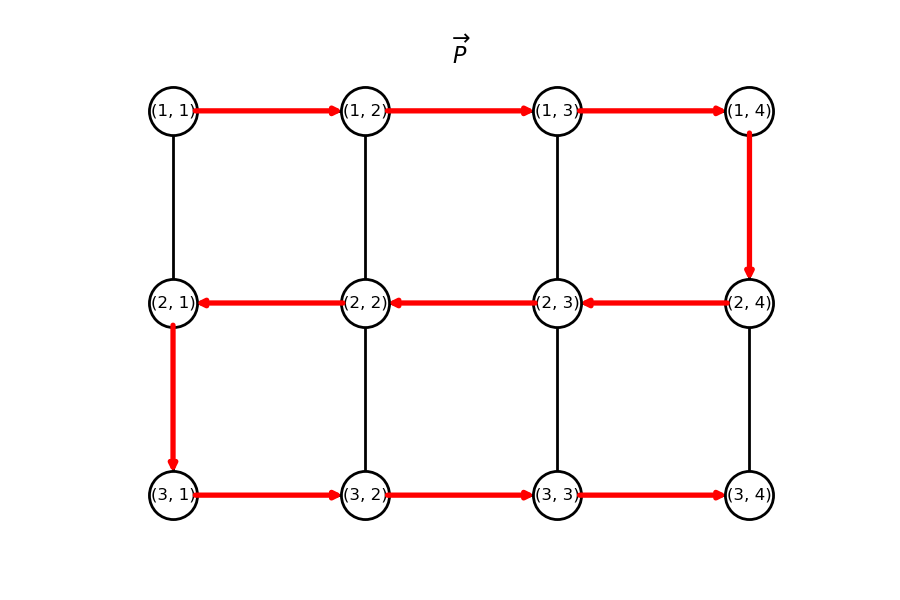}
			\end{minipage}
			\hfill
			\begin{minipage}{0.48\textwidth}
				\centering
				\includegraphics[width=\textwidth]{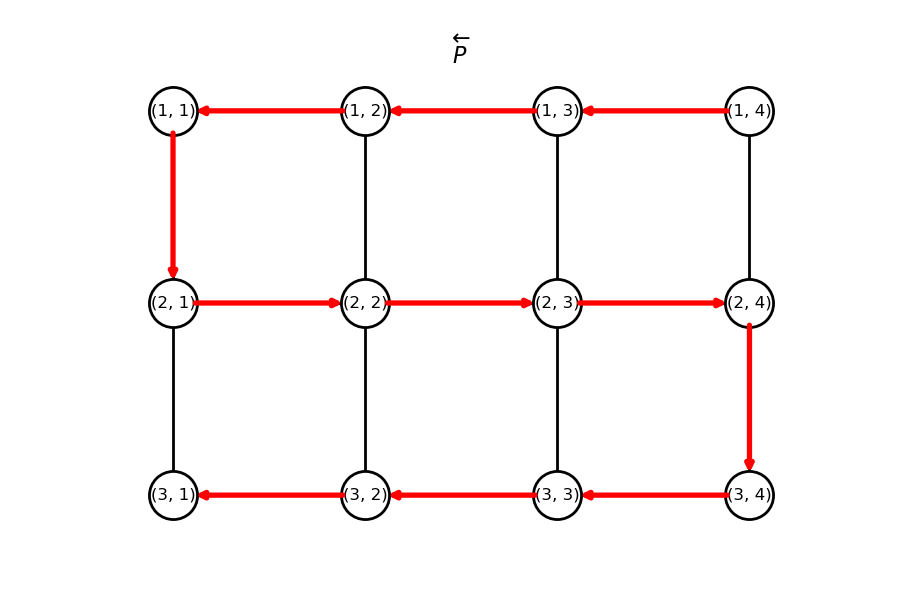}
			\end{minipage}
			\caption{Hamiltonian paths \(\overrightarrow{P}_{2}\) and \(\overleftarrow{P}_{2}\) in \(P_3\square P_4\).}
			\label{fig:path}
		\end{figure}
		
	In the grid graph shown in Figure~\ref{fig:path}, the vertex set of each row forms a
	\(2\)-row, whereas the vertex set of each column forms a \(1\)-row.
	For example,  all \(2\)-rows are
	\begin{align*}
		L_1=\{(1,1),(1,2),(1,3),(1,4)\}, \\
		L_2=\{(2,1),(2,2),(2,3),(2,4)\}, \\
		L_3=\{(3,1),(3,2),(3,3),(3,4)\},
	\end{align*}
	where in the notation introduced above,
	\(\ell_1(2)=(1,2)\).	
	\end{example}

	\begin{lemma}\label{lem:common-intervals}
		Let \(S\subseteq V(B(\mathbf n))\).  If \(S\) is an interval of every path \(\overrightarrow{P}_{i}\) and \(\overleftarrow{P}_{i}\), \(i=1,\ldots,d\), then \(S\) is empty, a singleton, or the whole vertex set.
	\end{lemma}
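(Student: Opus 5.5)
The plan is to show that a set $S$ with $|S|\ge 2$ satisfying the hypothesis is saturated in every direction, and then that a set saturated in every direction is the whole vertex set. Everything rests on a dichotomy for one fixed coordinate $i$.

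\emph{Step 1 (dichotomy).} Fix $i$, and let $L_1,\ldots,L_q$ be the ordering of the $i$-rows from Lemma~\ref{lem:opposite-snakes}. We claim that either $S$ is contained in a single $i$-row, or $S$ is saturated in the $i$-direction. Since $S$ is an interval of $\overrightarrow{P}_{i}$, and each $L_t$ occurs consecutively along $\overrightarrow{P}_{i}$ with the rows visited in the order $L_1,\ldots,L_q$, the set $S$ meets a consecutive block of rows $L_a,\ldots,L_b$. It contains every interior row $L_t$ with $a<t<b$ entirely. Suppose $a<b$. Then $S\cap L_a$ is a nonempty final segment of $L_a$ in the $\overrightarrow{P}_{i}$-order, and $S\cap L_b$ is a nonempty initial segment of $L_b$ in that order.

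Now use that $S$ is also an interval of $\overleftarrow{P}_{i}$. By property (ii), $\overleftarrow{P}_{i}$ visits the rows in the same order, so $S\cap L_a$ must also be a final segment of $L_a$ in the $\overleftarrow{P}_{i}$-order. By property (iii), that order is the reverse of the $\overrightarrow{P}_{i}$-order, so $S\cap L_a$ is an initial segment of $L_a$ in the $\overrightarrow{P}_{i}$-order. A nonempty subset of a path that is both an initial and a final segment is the whole path, hence $S\cap L_a=L_a$. The same argument gives $S\cap L_b=L_b$. Thus when $a<b$, every $i$-row meeting $S$ lies in $S$, which is the claim.

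\emph{Step 2 (excluding the single-row case).} Assume $|S|\ge 2$ and suppose, for some $i$, that $S$ lies inside one $i$-row $L$. Two distinct vertices of $L$ differ only in coordinate $i$. Choose $j\ne i$, which is possible because $d\ge2$. A $j$-row fixes coordinate $i$, so $S$ is not contained in a single $j$-row. By Step~1 applied to $j$, $S$ is saturated in the $j$-direction. Hence $S$ contains an entire $j$-row through a vertex of $S$. Since $n_j\ge 2$, that row contains two vertices with different $j$-coordinates, contradicting $S\subseteq L$. Therefore $S$ is saturated in every direction $i=1,\ldots,d$.

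\emph{Step 3 (conclusion).} A nonempty set saturated in every direction is closed under changing any single coordinate to any admissible value. Starting from a vertex of $S$, we can reach any vertex of $B(\mathbf n)$ by changing one coordinate at a time, so $S=V(B(\mathbf n))$. Together with the trivial cases $|S|\le 1$, this proves the lemma.

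The main obstacle is Step~1: the boundary rows $L_a,L_b$ must be handled using properties (ii) and (iii) of Lemma~\ref{lem:opposite-snakes}. The key point is that the same row order, combined with the reversed within-row order, forces a partial boundary row to be simultaneously a prefix and a suffix of that row. The remaining steps are routine bookkeeping using $d\ge2$ and $n_j\ge2$.
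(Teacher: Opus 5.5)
Your proposal is correct and follows essentially the same route as the paper: the same prefix/suffix argument for boundary rows at a fixed coordinate, saturation in every direction using $d\ge2$ and $n_j\ge2$, and the coordinate-by-coordinate walk to reach every vertex. The only difference is cosmetic. You phrase the second step as a contradiction that excludes the single-row case, whereas the paper does a direct case split on whether two vertices of $S$ differ only in coordinate $k$.
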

	
	\begin{proof}
		Fix a coordinate \(i\).  We first state and prove the following claim.
		
		\medskip
		\noindent\emph{Claim:} If \(S\) is an interval of both \(\overrightarrow{P}_{i}\) and \(\overleftarrow{P}_{i}\), and intersects at least two distinct \(i\)-rows, then every such \(i\)-row is entirely contained in \(S\).
		
		\medskip
		\noindent\emph{Proof of the claim.}
		In both \(\overrightarrow{P}_{i}\) and \(\overleftarrow{P}_{i}\), each \(i\)-row \(L_t\) forms a consecutive segment, and the segments occur in the same order	\(L_1,\ldots,L_q\).
		Since \(S\) is an interval in both paths, the \(i\)-rows intersecting \(S\) form a consecutive sequence
		\(L_t,L_{t+1},\ldots,L_{t+r}\).
		Therefore, every intermediate \(i\)-row \(L_{t+1},\ldots,L_{t+r-1}\) is completely contained in \(S\).
		
		Now consider the first \(L_t\) that intersects \(S\). Since \(S\) is an interval in \(\overrightarrow{P}_{i}\) and extends to a later \(i\)-row, the set \(S\cap L_t\) is a nonempty suffix of \(L_t\). Similarly, \(S\cap L_t\) is a nonempty suffix of \(L_t\) in \(\overleftarrow{P}_{i}\).
		Since \(\overleftarrow{P}_{i}\) traverses \(L_t\) in the opposite direction, \(S\cap L_t\) is also a nonempty prefix of \(L_t\). Thus, \(S\cap L_t = L_t\), so the whole \(L_t\) is contained in \(S\). The same argument applies to \(L_{t+r}\).
		Therefore, every \(i\)-row intersecting \(S\) is completely contained in \(S\). We also say that \(S\) is saturated in the \(i\)-direction, which proves the claim.
		
		The empty set and all singletons are clearly intervals of \(\overrightarrow{P}_{i}\) and \(\overleftarrow{P}_{i}\). Now suppose that \(S\) contains two distinct vertices \(u\) and \(v\).  We show that \(S\) must be the whole vertex set.
		
		Fix any coordinate \(k\).  If \(u\) and \(v\) differ in some coordinate other than \(k\), then they lie on two different \(k\)-rows, so \(S\) meets at least two \(k\)-rows.  According to the claim, \(S\) is saturated in the \(k\)-direction.
		
		If $u$ and $v$ differ only in coordinate \(k\),
		so they lie on the same \(k\)-row. Since \(d\ge 2\), choose
		\(i\neq k\), then \(u\) and \(v\) lie on distinct \(i\)-rows, so the claim implies that \(S\) is saturated in the \(i\)-direction. Since \(n_i\ge 2\),
		the \(i\)-row through \(u\) contains a vertex \(u'\in S\) with \(u'\neq u\). The vertices \(u\) and \(u'\) lie on distinct \(k\)-rows. Therefore, by the preceding claim, \(S\) is saturated in the \(k\)-direction.
		Thus, \(S\) is saturated in every coordinate direction.
	
	 Take any vertex \(x=(x_1,\ldots,x_d)\in S\).
	 For an arbitrary target vertex \(y=(y_1,\ldots,y_d)\in V(B(\mathbf n))\), starting from \(x\), we successively replace \(x_1\) by \(y_1\), then \(x_2\) by \(y_2\), and continue in this way until all coordinates have been replaced by the corresponding coordinates of \(y\).
	 At each step, only one coordinate is changed, and the saturation of \(S\) in every coordinate direction guarantees that the resulting vertex remains in \(S\). Finally, we obtain \(y\in S\). Since \(y\) was arbitrary,
	  \(S=V(B(\mathbf n))\). As shown above, every common interval \(S\) containing at least two vertices is equal to \(V(B(\mathbf n))\).
	\end{proof}
	
	\begin{lemma}\label{lem:interior-snakes}
		Let \(\mathcal{P}=\{\overrightarrow{P}_{i}, \overleftarrow{P}_{i}:i=1,\ldots,d\}\).
		The vector
		\[
		x=\frac{1}{2d}\sum_{H\in\mathcal{P}}\chi^H
		\]
		belongs to \(P(B(\mathbf n))^\circ\) and satisfies \(d_v(x)\le 2\) for every vertex \(v\).
	\end{lemma}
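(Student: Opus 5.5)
The proof has two parts: the degree bound, which is immediate, and membership in the relative interior, which is the real content.

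\emph{Degree bound.} Every $H\in\mathcal P$ is a Hamiltonian path, so it is a spanning tree of $B(\mathbf n)$ and $\chi^H\in P(B(\mathbf n))$. Since $x$ is a convex combination of $2d$ such vectors, $x\in P(B(\mathbf n))$. Every vertex has degree at most $2$ in a path, so $d_v(\chi^H)\le 2$ for each $H$, and averaging gives $d_v(x)\le 2$.

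\emph{Relative interior.} I would use the facial description of $P(G)$ from \eqref{eq:edmonds}. Then $x\in P(G)^\circ$ if and only if $x$ satisfies with equality only those inequalities that are tight on all of $P(G)$. The inequalities in question are the nonnegativity constraints $x_e\ge 0$ and the rank constraints $x(F)\le r_G(F)$. Since the grid is $2$-connected, no edge lies in every spanning tree and every edge lies in some spanning tree. I would therefore work with the standard reduced description: $x(E(U))\le |U|-1$ for $U$ with $G[U]$ connected and $2\le |U|\le n-1$, together with $x_e\ge 0$.

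Edges first. For each edge $e$, say in direction $i$, the path $\overrightarrow{P}_i$ traverses the $i$-rows one after another and covers every edge inside every $i$-row. Hence $x_e\ge \frac{1}{2d}>0$, so all nonnegativity constraints are strict.

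Next, the rank constraints. Suppose $x(E(U))=|U|-1$ for some $U$ with $2\le|U|<n$. Each $H\in\mathcal P$ is a tree, so $|E(H)\cap E(U)|\le |U|-1$. Averaging then forces $|E(H)\cap E(U)|=|U|-1$ for every $H$. A subforest of a path with $|U|$ vertices and $|U|-1$ edges is connected, so $U$ is an interval of every $H\in\mathcal P$. By Lemma~\ref{lem:common-intervals}, $U$ is empty, a singleton, or all of $V$, which is a contradiction. So only $x(E)=n-1$ is tight, and $x$ lies in the relative interior.

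The main obstacle is justifying that the relative interior of $P(G)$ is exactly the set of points satisfying strictly all non-implicit inequalities. Concretely, one must show that for a $2$-connected graph the implicit equalities of $P(G)$ are only $x(E)=n-1$, so that $P(G)$ has dimension $|E|-1$. This is standard: for a connected matroid the base polytope has dimension $|E|-1$, and the graphic matroid of a $2$-connected graph is connected. It must also be checked that the reduction to connected induced sets $U$ loses nothing. If $F$ is tight with $F$ not of that form, then $F$ can be decomposed into blocks $E(U_j)$ over its components, with $x(F)\le\sum_j x(E(U_j))\le\sum_j(|U_j|-1)$. Equality then forces each component's constraint to be tight, and for components with $|U_j|\ge 2$ this contradicts the interval argument above. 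Singleton components give trivial constraints.
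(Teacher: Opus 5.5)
Your proof is correct and follows essentially the same route as the paper: averaging the $2d$ Hamiltonian paths gives membership in $P(B(\mathbf n))$ and the degree bound, every edge lies in both paths of its direction, and tightness of $x(E(U))\le |U|-1$ would force $U$ to be a common interval of all the paths, contradicting Lemma~\ref{lem:common-intervals}. The step you single out as the main obstacle is in fact automatic, and the paper leaves it implicit. A point of $P(G)$ that satisfies every inequality of a finite description strictly has a neighbourhood within the hyperplane $x(E)=n-1$ contained in $P(G)$, so it already lies in $P(G)^\circ$ and the dimension statement comes for free.
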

	
	\begin{proof}
	Every path \(H\in\mathcal P\) is a Hamiltonian path and hence a spanning
	tree of \(B(\mathbf n)\). Therefore,
	\(\chi^H\in P(B(\mathbf n))\) for every \(H\in\mathcal P\).
	Since \(|\mathcal P|=2d\) and \(x\) is the average of these incidence vectors, it follows that
	\(x\in P(B(\mathbf n))\) and \(x(E)= |V(B(\mathbf n))|-1\).

	Moreover, every vertex has degree at most two in a path, so
	\(d_v(\chi^H)\le 2\) for every \(H\in\mathcal P\) and \(v\in V(B(\mathbf n))\).
	Averaging over \(H\in\mathcal P\), we obtain
	\(d_v(x)\le 2\) for every \(v\in V(B(\mathbf n))\).
		
		Next, we shall prove that \(x\in P(B(\mathbf n))^\circ\).  Every edge \(e\in E(B(\mathbf n))\) belongs to an \(i\)-row for some \(i\in\{1,\ldots,d\}\). Since both \(\overrightarrow{P}_{i}\) and \(\overleftarrow{P}_{i}\) traverse every \(i\)-row completely, \(e\) belongs to both \(\overrightarrow{P}_{i}\) and \(\overleftarrow{P}_{i}\). Therefore,
		\(x_e>0\) for every \(e\in E(B(\mathbf n))\).
		
		It remains only to verify that \(x(E(S))<|S|-1\) for every \(S\subsetneq V\) with \(|S|\ge2\).
		
		Fix such an \(S\).  For a Hamiltonian path \(H\), the induced subgraph \(H[S]\) is a disjoint union of paths, so
		\(|E(H[S])|\leq |S|-1\),
		with equality if and only if $H[S]$ is connected, equivalently, if and only if \(S\) is an interval of \(H\).
		Suppose the average \(x(E(S))=|S|-1\). Then every path \(H\in\mathcal{P}\) would have exactly \(|S|-1\) edges inside \(S\), and hence \(S\) would be an interval of every \(\overrightarrow{P}_{i}\) and \(\overleftarrow{P}_{i}\).  By Lemma \ref{lem:common-intervals}, \(S\) would be a singleton or the whole vertex set, which contradicts \(|S|\geq 2\) and \(S\subsetneq V(B(\mathbf n))\).
	     Therefore, \(x(E(S))<|S|-1\). 	Consequently,
		\(x\in P(B(\mathbf n))^\circ\).
	\end{proof}
	
	The following two results immediately follow from
	Lemma~\ref{lem:interior-snakes}.
	\begin{theorem}\label{thm:path-products-RN}
		For \(d\ge2\) and \(n_i\ge2\) with \(i=1,\ldots,d\), the Cartesian product of the paths
		$P_{n_1}\square\cdots\square P_{n_d}$
		is RN.
	\end{theorem}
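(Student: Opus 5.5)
The plan is to deduce the theorem directly from the characterization \eqref{eq:RN-RP-polytope} of RN graphs, using the explicit point from Lemma~\ref{lem:interior-snakes}. By \eqref{eq:RN-RP-polytope}, $B(\mathbf n)$ is RN precisely when $P(B(\mathbf n))^\circ$ contains a vector $x$ with $d_v(x)\le 2$ for every vertex $v$. So the whole task is to exhibit such an $x$.

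For each coordinate $i\in\{1,\ldots,d\}$, I take the pair of Hamiltonian paths $\overrightarrow{P}_{i},\overleftarrow{P}_{i}$ given by Lemma~\ref{lem:opposite-snakes}. This lemma applies because $d\ge 2$ and every $n_i\ge 2$. Let $\mathcal P$ be the family of these $2d$ paths, and set
\[
x=\frac{1}{2d}\sum_{H\in\mathcal P}\chi^H .
\]
Lemma~\ref{lem:interior-snakes} then gives both required facts: $x\in P(B(\mathbf n))^\circ$, and $d_v(x)\le 2$ for all $v$. Substituting $x$ into \eqref{eq:RN-RP-polytope} shows that $B(\mathbf n)$ is RN.

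In this final step nothing remains to be verified. The substantive work is already contained in Lemma~\ref{lem:interior-snakes}, which shows that $x$ lies in the relative interior. That lemma proves every edge has positive weight, since each edge lies in some $i$-row and every $i$-row is traversed by both $\overrightarrow{P}_{i}$ and $\overleftarrow{P}_{i}$. It also proves every proper nontrivial rank inequality $x(E(S))\le |S|-1$ is strict. For the latter it uses Lemma~\ref{lem:common-intervals}: a set that is an interval of all $2d$ snakes is trivial.

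One point deserves a check, namely that strictness of the inequalities for induced sets $E(S)$ suffices for relative interiority. This holds because the affine hull of $P(G)$ for a $2$-connected graph is $\{x:x(E)=n-1\}$, and $B(\mathbf n)$ is $2$-connected when $d\ge2$ and every $n_i\ge 2$. Consequently, a point that satisfies all nonnegativity and proper rank inequalities strictly lies in the relative interior. This is the one place I would double-check, and it has already been handled in the proof of Lemma~\ref{lem:interior-snakes}.
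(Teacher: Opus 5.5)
Your proposal is correct and is exactly the paper's argument. The paper also obtains the theorem immediately from \eqref{eq:RN-RP-polytope}, applied to the average of the $2d$ paths $\overrightarrow{P}_{i},\overleftarrow{P}_{i}$, with all the substantive work placed in Lemma~\ref{lem:interior-snakes} and Lemma~\ref{lem:common-intervals}.

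Your side remark on $2$-connectivity is true but not needed: once a point of $P(G)$ satisfies all constraints other than $x(E)=n-1$ strictly, a neighborhood of it inside the hyperplane $\{x(E)=n-1\}$ already lies in $P(G)$, so relative interiority follows directly.
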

	
	\begin{corollary}\label{cor:grid-RN}
		\(P_m\square P_n\) is RN for all $m, n \in \mathbb{N}$.
	\end{corollary}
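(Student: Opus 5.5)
The statement is Corollary~\ref{cor:grid-RN}: $P_m\square P_n$ is RN for all $m,n\in\mathbb N$. I plan to derive it as a specialization of Theorem~\ref{thm:path-products-RN} to $d=2$, together with a short treatment of the degenerate sizes that the theorem excludes.

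\emph{Main case.} If $m,n\ge 2$, then $P_m\square P_n=B(m,n)$ with $d=2$, so Theorem~\ref{thm:path-products-RN} applies directly. To make the route explicit, Lemma~\ref{lem:interior-snakes} gives the point
\[
x=\tfrac14\bigl(\chi^{\overrightarrow P_1}+\chi^{\overleftarrow P_1}+\chi^{\overrightarrow P_2}+\chi^{\overleftarrow P_2}\bigr)\in P(B(m,n))^\circ
\]
with $d_v(x)\le 2$ for every vertex $v$. Hence $G$ is RN by \eqref{eq:RN-RP-polytope}.

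\emph{Degenerate cases.} If $\min\{m,n\}=1$, the product is a path $P_k$, and a single vertex when $k=1$.
\begin{itemize}
\item A tree has exactly one spanning tree, namely itself, so $P(G)=\{\chi^{P_k}\}$. This polytope is a single point, so it equals its own relative interior.
\item Every vertex of a path has degree at most $2$.
\item Therefore $P(G)^\circ\cap\{x:d_v(x)\le2\}\neq\emptyset$, and $P_k$ is RN.
\end{itemize}
The single-vertex graph has $E=\emptyset$. It satisfies the criterion vacuously: the empty edge set is its unique spanning tree, its curvature is $p_v=1\ge0$, and there are no edge weights to choose. If $\mathbb N$ is read as including $0$, the empty graph is excluded, since all graphs in the paper are connected and therefore nonempty.

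\emph{Where the difficulty lies.} The only real content is the $m,n\ge2$ case, and it is already carried by Lemma~\ref{lem:common-intervals} and Lemma~\ref{lem:interior-snakes}. There, the key step is that a common interval of all four snake paths with at least two vertices must be the whole vertex set, which forces every rank inequality to be strict at $x$. What remains for the corollary is to state the boundary cases explicitly, so that the claim holds for every $m,n$ and not only for $m,n\ge2$.
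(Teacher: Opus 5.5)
Your proposal is correct and follows the paper's route: the paper derives Corollary~\ref{cor:grid-RN} immediately from Lemma~\ref{lem:interior-snakes}, which amounts to Theorem~\ref{thm:path-products-RN} with $d=2$, exactly as in your main case. Your separate treatment of $\min\{m,n\}=1$ (a path, whose unique spanning tree has maximum degree at most two, or a single vertex) is a correct addition that covers sizes excluded by the paper's standing assumption $n_i\ge 2$ and left implicit there.
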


    \begin{corollary}\label{cor:RP-SRN-products} Let \(G=P_{n_1}\square\cdots\square P_{n_d}\), where \(d\ge2\) and \(n_i\ge2\) for \(i=1,\ldots,d\). Then the following statements hold: \begin{enumerate}[label=(\roman*)] \item If \(\prod_i n_i\) is even, then \(G\) is RP. \item If \(\prod_i n_i\) is odd, then \(G\) is SRN. \end{enumerate} \end{corollary}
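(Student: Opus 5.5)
Theorem~\ref{thm:path-products-RN} already gives RN for every such product, so it suffices to decide RP. By \eqref{eq:RN-RP-polytope}, $G$ is RP exactly when some $x\in P(G)^\circ$ has $d_v(x)<2$ for every $v$. The parity of $N=\prod_i n_i=|V|$ decides this, with a counting argument for the odd case and a bipartite matching perturbation for the even case.

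\emph{Odd case (ii).} I would use that $G$ is bipartite, with parts $V_0,V_1$ given by the parity of the coordinate sum $a_1+\cdots+a_d$. When $N$ is odd, every $n_i$ is odd, and the corner parity class is larger: $|V_0|=(N+1)/2$ and $|V_1|=(N-1)/2$. Every edge has exactly one endpoint in $V_1$. Hence for any $x\in P(G)$,
\[
\sum_{v\in V_1} d_v(x)=x(E)=N-1=2|V_1|.
\]
If $d_v(x)<2$ held for all $v$, the left side would be strictly less than $2|V_1|$, a contradiction. So $G$ is not RP, and since it is RN, it is SRN.

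\emph{Even case (i).} Here some $n_i$ is even, so $G$ has a perfect matching $M$ (take $i$-rows of even length and pair consecutive vertices). The idea is to take a spanning tree $T\supseteq M$ in which every vertex has degree $\le 2$ except that matched structure pushes degrees down. A better route is the following. Start from the point $x$ of Lemma~\ref{lem:interior-snakes}, which lies in $P(G)^\circ$ with degrees $\le2$. I want to push it toward a point $y\in P(G)$ with $d_v(y)<2$ for all $v$. Then $(1-\varepsilon)x+\varepsilon y$ has all degrees $<2$ only where $x$ already has them, which is not enough. Instead I would use the RP linear program \eqref{eq:RP-LP}: it suffices to find any $y\in P(G)$ with $d_v(y)<2$ for all $v$, and then mix it with the interior point $a$, exactly as in the proof of Theorem~\ref{thm:complexity}. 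To construct $y$, average the Hamiltonian paths: the endpoints of a Hamiltonian path have degree $1$. So I would choose a family of Hamiltonian paths whose endpoints together cover every vertex, and average them.

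For an even product, I would use the fact that grids with an even number of vertices are Hamiltonian-connected between suitable opposite-colour pairs. Taking an even-length $i$-direction, the snake constructions of Lemma~\ref{lem:opposite-snakes} can be started and ended at many vertices. Concretely, for each edge $uv$ of the perfect matching $M$ I want a Hamiltonian path with endpoints $u$ and $v$, that is, a Hamiltonian cycle through $uv$ (or a path ending at $u$ and $v$). Then averaging these paths over $uv\in M$ gives each vertex degree at most $2-1/|M|$.

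This step is the main obstacle: exhibiting, for every vertex, some Hamiltonian path ending there. In bipartite balanced grids with $N$ even, each vertex should be an endpoint of some Hamiltonian path. I would prove it by induction on dimension, using the snake of Lemma~\ref{lem:opposite-snakes} with the even coordinate varied and choosing the starting row suitably. The fallback is to cite known Hamiltonian-path results for grid graphs (Itai--Papadimitriou--Szwarcfiter). With $y$ in hand, the mixing argument gives RP.
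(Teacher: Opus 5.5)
Your part (ii) is correct and self-contained. The identity $\sum_{v\in V_1}d_v(x)=x(E)=N-1=2|V_1|$ for every $x\in P(G)$ is the bipartite obstruction the paper imports from Devriendt (unequal bipartition classes rule out RP). Together with Theorem~\ref{thm:path-products-RN} it gives SRN. Your reduction in (i) is also sound: it suffices to produce some $y\in P(G)$ with $d_v(y)<2$ for all $v$ and then mix with an interior point. The route you discarded works as well, and your remark that it ``is not enough'' is mistaken. If $x$ is the point of Lemma~\ref{lem:interior-snakes}, then $d_v\bigl((1-\varepsilon)x+\varepsilon y\bigr)\le 2(1-\varepsilon)+\varepsilon d_v(y)<2$ at every vertex for each $\varepsilon\in(0,1)$, and this point lies in $P(G)^\circ$.

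The genuine gap is the existence of the Hamiltonian paths. That is the entire combinatorial content of (i), and you leave it as an unproved ``main obstacle.'' Worse, the one concrete construction you commit to fails, because an edge of a row matching $M$ need not lie on any Hamiltonian cycle. In $P_2\square P_3$ the only even coordinate forces $M$ to be the three rungs. A Hamiltonian path from $(1,2)$ to $(2,2)$ would have to use both edges at $(1,1)$ and both edges at $(2,1)$. That forces the subpath $(1,2),(1,1),(2,1),(2,2)$, which already joins the two ends and so misses $(1,3),(2,3)$. The Itai--Papadimitriou--Szwarcfiter fallback covers only two-dimensional rectangular grids, and your induction on dimension is not specified.

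The missing idea, which the paper supplies, is that $G$ is Hamiltonian when $N$ is even. Choose $n_i$ even and $j\neq i$; then $P_{n_i}\square P_{n_j}$ has a Hamiltonian cycle. Since $H\square P_k$ is Hamiltonian whenever $H$ is, adding the remaining factors one at a time gives a Hamiltonian cycle $C$ of $G$. Deleting each of the $N$ edges of $C$ gives Hamiltonian paths in which every vertex is an endpoint of some path. Their average is $y=\frac{N-1}{N}\chi^C\in P(G)$ with $d_v(y)=2-\frac{2}{N}<2$, and your mixing step finishes. This is exactly Devriendt's theorem that Hamiltonian graphs are RP, which the paper cites. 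Equivalently, you can keep your matching idea by taking $M$ to be every other edge of $C$ instead of a row matching.
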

\begin{proof}
	We first prove~{\rm (i)}. Assume that $\prod_i n_i$ is even. Then at least one $n_i$ is even. Choose $j\neq i$ and let $G_1=P_{n_i}\square P_{n_j}$. Since $n_in_j$ is even, the grid graph $G_1$ has a Hamiltonian cycle. Moreover, the Cartesian product of a Hamiltonian graph and a path is Hamiltonian. Applying this fact successively to the remaining path factors shows that $G$ is Hamiltonian. By~\cite[Theorem~6.2]{DevriendtRN}, every Hamiltonian graph is RP. Therefore, $G$ is RP.

We next prove~{\rm (ii)}. Assume that $\prod_i n_i$ is odd. It is clear that $G$ is bipartite. Since $G$ has an odd number of vertices, its two bipartition classes have different sizes. By~\cite[Proposition~3.5]{DevriendtRN}, a bipartite graph whose bipartition classes have different sizes is not RP. On the other hand, Theorem~\ref{thm:path-products-RN} shows that $G$ is RN. Therefore, $G$ is SRN.
\end{proof}	

\section{Conclusion}
In this paper, we resolved several problems posed by Devriendt concerning resistance nonnegative graphs. We established polynomial time recognition results for RN, RP, and SRN graphs, characterized the vertices of the tree double matching polytope and determined the minimal dilation factor that makes it a lattice polytope, and determined the resistance properties of finite Cartesian products of paths.

Several problems still remain open. For example, Devriendt asked which biconnected planar graphs are RN. By Corollary~\ref{cor:grid-RN}, every grid graph $P_m\square P_n$ is biconnected, planar, and RN. In contrast, it is not hard to verify that the complete bipartite graph \(K_{2,m}\) with integer \(m>3\) is biconnected and planar but is not RN. Therefore, not every biconnected planar graph is RN. Beyond this family, however, a broader structural characterization of biconnected planar RN graphs remains open. Moreover, the study of Cartesian products in this paper is restricted to products of paths. It would therefore be interesting to determine conditions under which Cartesian products of more general connected graphs are RN or RP.

\section*{Declaration of competing interest}
The authors declare that they have no known competing financial interests or personal relationships that could have appeared to influence the work reported in this paper.
\section*{Data availability}
No data was used for the research described in the article.
\section*{Acknowledgments}
This paper is supported by the National Natural Science Foundation of China (through grant No. 12171414) and Taishan Scholars Special Project of Shandong Province (through grant No. tsqn202211113). After finishing this manuscript, we learned that Agrahari et al.~\cite[Theorem~7]{AgrahariRC} had independently obtained the Theorem~\ref{thm:path-products-RN}. Although the two proofs both use Hamiltonian paths constructed along coordinate rows, our argument gives an explicit point obtained by averaging the incidence vectors of \(2d\) Hamiltonian paths (see Lemma~\ref{lem:interior-snakes}). Moreover, Theorem~\ref{thm:path-products-RN} further yields the characterization of the RP and SRN cases in Corollary~\ref{cor:RP-SRN-products}. For these reasons, we have retained this part of the proof.
\section*{Declaration of generative AI and AI-assisted technologies in the manuscript preparation process}
During the preparation of this work, the author(s) used ChatGPT 5.6 Pro for checking grammatical errors and typos during the revision stage. The author(s)reviewed andedited the output as needed and take full responsibility for the content of the published article.
\section*{Conflict of interest statement}
The authors declare that they have no conflict of interest.

\end{document}